\theoremstyle{break}
\newtheorem{theorem}{Theorem}[section]
\newtheorem{lemma}[theorem]{Lemma}
\newtheorem{proposition}[theorem]{Proposition}
\newtheorem{remark}[theorem]{Remark}
\newenvironment{proof}[1][Proof]{\textbf{#1.} }{\ \rule{0.5em}{0.5em}}
\numberwithin{equation}{section}
\begin{document}

\title{Universal structures in some mean field spin glasses, and an application}
\author{Erwin Bolthausen\thanks{Universit\"{a}t Z\"{u}rich, eb@math.uzh.ch}%
\thanks{Supported in part by the Swiss National Foundation under contract no
200020-116348} \quad Nicola Kistler\thanks{ENS Lyon,
nkistler@umpa.ens-lyon.fr}}
\date{}
\maketitle

\begin{abstract}
We discuss a spin glass reminiscent of the Random Energy Model, which allows
in particular to recast the Parisi minimization into a more classical Gibbs
variational principle, thereby shedding some light on the physical meaning of
the order parameter of the Parisi theory. As an application, we study the
impact of an extensive cavity field on Derrida's REM: Despite its simplicity,
this model displays some interesting features such as ultrametricity and chaos
in temperature.

\end{abstract}

\section{Introduction}

After years of intensive research and important advances (\cite{ass},
\cite{guerra}, \cite{guerratoninelli}, \cite{talagrand}), the Parisi theory
\cite{parisi}, originally developed in the study of the
Sherrington-Kirkpatrick model of spin glasses, still remains mathematically
quite elusive.

Despite the spectacular proof by Guerra and Talagrand that Parisi's replica
symmetry breaking scheme provides the correct free energy for the SK-model,
many aspects of the Parisi ansatz continue to present major challenges. In
fact, the appearance of seemingly universal features, such as the
Derrida-Ruelle hierarchical structures, the (related) ultrametricity, the law
of the pure states, are still far from being understood.

We hope to gain some modest insights into these issues by considering
generalizations of the Random Energy Model (REM for short), that is, models
with Hamiltonians given by independent random variables. Our generalization is
different from the \textquotedblleft generalized random energy
model\textquotedblright\ invented by Derrida. It can be analyzed by large
deviation techniques. Despite its simplicity, it exhibits a number of
interesting properties, like asymptotic ultrametricity, Poisson-Dirichlet
description of the pure states, chaos in temperature, and a non-trivial
dependence of the overlap structure on the temperature. The free energy is
given by a Parisi-type formula which naturally can be linked to a Gibbs
variational formula via a kind of duality relation which makes apparent why an
infimum appears in the Parisi formulation.

The second part of this work presents a particular mean field spin glass which
we call the "REM+Cavity". It is related to the random overlap structures of
Aizenman, Sims and Starr \cite{ass}; but, instead of taking the
thermodynamical limit in the REM \textit{first} and \textit{then} perform a
one spin perturbation of the Derrida-Ruelle structures, we perform a cavity
field perturbation on the finite systems first, and only subsequently do we
take the thermodynamical limit. As a first step, we stick here to the simplest
finite size counterpart of the Derrida-Ruelle structures, the Random Energy
Model \cite{derridaREM}. Our model shows a delicate phase transition where
Replica Symmetry is broken and ultrametricity sets in. In the low temperature
region massive pure states emerge, with law being given by the
Poisson-Dirichlet distribution. The model also displays chaotic behavior in
temperature. The natural extensions of our approach to models with more
intricate dependencies than those of REM-type (such as for instance the
Generalized Random Energy Models) turns out to be quite a subtle. We will
address this issue in a forthcoming paper.

\section{Mean field models of REM-type\label{Sect_REMType}}

Consider a double sequence $X_{\alpha,i},\alpha,i\geq1,$ of i.i.d. random
variables with a distribution $\mu$, taking values in a Polish space
$(S,{\mathcal{S}})$, and which are defined on a probability space
$(\Omega,{\mathcal{F}},{\mathbb{P}})$. For $N\in{\mathbb{N}}$, $\alpha>1$, the
empirical distributions is defined by
\[
L_{N,\alpha}\overset{\mathrm{def}}{=}{\frac{1}{N}}\sum_{i=1}^{N}%
\delta_{X_{\alpha,i}},
\]
which takes values in ${{\mathcal{M}}}_{1}^{+}(S)$, the set of probability
measures on $(S,{\mathcal{S}})$, which itself is a Polish space when equipped
with the weak topology. Let $\Phi:{{\mathcal{M}}}_{1}^{+}\rightarrow
{\mathbb{R}}$ be a continuous function. We write
\[
Z_{N}\overset{\mathrm{def}}{=}2^{-N}\sum_{\alpha=1}^{2^{N}}\operatorname{exp}%
\left[  N\Phi(L_{N,\alpha})\right]  ,\;f_{N}(\Phi,\mu)\overset{\mathrm{def}%
}{=}{\frac{1}{N}}\log Z_{N}.
\]

\begin{theorem}
\label{main_theorem_abstract}The limit $f(\Phi,\mu)=\lim_{N\rightarrow\infty
}f_{N}(\Phi,\mu)$ exists ${\mathbb{P}}-a.s.,$ is non-random, and is given as%
\[
f(\Phi,\mu)=\sup\left\{  \Phi(\nu)-H(\nu\mid\mu):\;H(\nu\mid\mu)\leq
\log2\right\}  ,
\]
where $H$ is the usual relative entropy $H(\nu\mid\mu)\overset{{\text{def}}%
}{=}\int\log\left(  {\frac{d\nu}{d\mu}}\right)  d\nu$ if $\nu\ll\mu$ and
$\log(d\nu/d\mu)\in L_{1}(\mu)$, and $=\infty$ otherwise.
\end{theorem}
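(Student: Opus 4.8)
The plan is to make the Random Energy Model heuristic rigorous via Sanov's theorem and a first/second moment analysis of the number of ``replicas'' whose empirical measure falls in a given region. By Sanov's theorem the $L_{N,\alpha}$, $\alpha\ge 1$, are i.i.d.\ and obey a large deviation principle on $\mathcal{M}_1^+(S)$, with the weak topology, with good rate function $H(\cdot\mid\mu)$; consequently the empirical measures $(L_{N,1})_N$ are exponentially tight. For a Borel set $A$ write $M_N(A)=\#\{\alpha\le 2^N:L_{N,\alpha}\in A\}$, a binomial variable with mean $m_N(A)=2^N\mathbb{P}(L_{N,1}\in A)$ and variance $\le m_N(A)$; Markov's inequality gives $\mathbb{P}(M_N(A)\ge e^{Nt})\le e^{-Nt}m_N(A)$ and Chebyshev's gives $\mathbb{P}(M_N(A)\le\tfrac12 m_N(A))\le 4/m_N(A)$. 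Every almost-sure statement below is obtained by applying these along $N$ with Borel--Cantelli, over a countable collection of sets $A$ and of scales $\varepsilon\downarrow 0$, so the exceptional null set is harmless; the non-randomness of the limit then follows because the right-hand side is a fixed number.

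\emph{Lower bound.} Fix $\nu$ with $H(\nu\mid\mu)<\log 2$ and $\varepsilon>0$, and take an open ball $B\ni\nu$ with $\Phi>\Phi(\nu)-\varepsilon$ on $B$. The Sanov lower bound yields $m_N(B)\ge\exp[N(\log 2-H(\nu\mid\mu)-\varepsilon)]$ for large $N$, which is exponentially large, so Chebyshev and Borel--Cantelli force $M_N(B)\ge\tfrac12 m_N(B)$ a.s.\ eventually; hence $Z_N\ge 2^{-N}M_N(B)e^{N(\Phi(\nu)-\varepsilon)}\ge\tfrac12\exp[N(\Phi(\nu)-H(\nu\mid\mu)-2\varepsilon)]$ and $\liminf_N f_N\ge\Phi(\nu)-H(\nu\mid\mu)$ a.s. Since, by convexity of $H(\cdot\mid\mu)$, $H((1-t)\nu^\ast+t\mu\mid\mu)\le(1-t)H(\nu^\ast\mid\mu)<\log 2$ for $t>0$ while $(1-t)\nu^\ast+t\mu\to\nu^\ast$ weakly as $t\downarrow 0$ (along which $\Phi$ converges, and $H(\cdot\mid\mu)$ converges too by lower semicontinuity), the supremum over $\{H<\log 2\}$ equals the one over $\{H\le\log 2\}$; maximizing over a sequence of such $\nu$ gives $\liminf_N f_N\ge f(\Phi,\mu)$ a.s.

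\emph{Upper bound.} By exponential tightness fix a compact $K$ with $m_N(K^c)\le e^{-2N}$ eventually; then a.s.\ eventually $M_N(K^c)=0$, so all replicas lie in $K$ and $\Phi$ need not be controlled off $K$. For each $\nu\in K$, continuity of $\Phi$ and lower semicontinuity of $H(\cdot\mid\mu)$ provide an open ball $U_\nu\ni\nu$ with $|\Phi-\Phi(\nu)|<\varepsilon$ and $H(\cdot\mid\mu)>H(\nu\mid\mu)-\varepsilon$ on $\overline{U_\nu}$; extract a finite subcover $U_{\nu_1},\dots,U_{\nu_m}$, so that a.s.\ eventually $Z_N\le 2^{-N}\sum_j M_N(\overline{U_{\nu_j}})e^{N(\Phi(\nu_j)+\varepsilon)}$. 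Put $\hat h_j=\inf_{\overline{U_{\nu_j}}}H(\cdot\mid\mu)\ge H(\nu_j\mid\mu)-\varepsilon$. The Sanov upper bound and Markov give, a.s.\ eventually, $M_N(\overline{U_{\nu_j}})=0$ when $\hat h_j>\log 2$ and $M_N(\overline{U_{\nu_j}})\le\exp[N(\log 2-\hat h_j+\varepsilon)]$ otherwise, so $Z_N\le\sum_{j:\hat h_j\le\log 2}\exp[N(\Phi(\nu_j)-\hat h_j+2\varepsilon)]$ and $\limsup_N f_N\le 2\varepsilon+\max_{j:\hat h_j\le\log 2}(\Phi(\nu_j)-\hat h_j)$. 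For each such $j$, $H(\nu_j\mid\mu)\le\hat h_j+\varepsilon\le\log 2+\varepsilon$, so $\nu_j$ is almost feasible; the interpolant $\nu_j^t=(1-t)\nu_j+t\mu$ with $t=O(\varepsilon)$ satisfies $H(\nu_j^t\mid\mu)\le\log 2$, and as all these interpolants lie in the fixed compact set $\{(1-s)\eta+s\mu:\eta\in K,\,s\in[0,1]\}$, uniform continuity of $\Phi$ there together with $\Phi(\nu_j^t)-H(\nu_j^t\mid\mu)\le f(\Phi,\mu)$ yields $\Phi(\nu_j)-\hat h_j\le f(\Phi,\mu)+o_\varepsilon(1)$. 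Letting $\varepsilon\downarrow 0$ gives $\limsup_N f_N\le f(\Phi,\mu)$ a.s., which together with the lower bound proves the theorem.

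I expect the main obstacle to be exactly the upper bound: one must check that the ``heavy but exponentially rare'' regions $\{H(\cdot\mid\mu)>\log 2\}$ carry no replica at all (a.s.\ eventually), which is where the threshold $\log 2$ --- the REM mechanism --- enters and where the quenched free energy genuinely differs from the annealed value $\sup_\nu\{\Phi(\nu)-H(\nu\mid\mu)\}$ of a naive first-moment bound; and then one must reconcile the finite-cover estimate with the clean constrained supremum, for which the interpolation-with-$\mu$ device and the exponential-tightness/good-rate-function content of Sanov's theorem are the decisive tools.
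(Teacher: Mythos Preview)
Your argument is correct and follows essentially the same route as the paper's: Sanov's theorem for the empirical measures, first/second-moment control of the counting variables $M_N(A)$, exponential tightness to reduce to a compact set, and a finite cover for the upper bound. The one technical difference worth noting is how the boundary $\{H(\cdot\mid\mu)=\log 2\}$ is handled. The paper chooses its covering balls $B_r(\nu)$ with radii $r$ avoiding the countable set where $H(B_r(\nu))\neq H(\overline{B_r(\nu)})$, so that the Sanov upper and lower bounds match and the estimates on $M_N(U)$ are directly in terms of $H(\nu\mid\mu)$; in the resulting cover each surviving center $\nu_k$ is then automatically feasible for the constrained supremum. You instead work with $\hat h_j=\inf_{\overline{U_{\nu_j}}}H(\cdot\mid\mu)$ and use the convex interpolation $(1-t)\nu+t\mu$ to push a near-feasible center back into $\{H\le\log 2\}$ (and, symmetrically, in the lower bound to show that the supremum over $\{H<\log 2\}$ equals that over $\{H\le\log 2\}$). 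Both devices work; the paper's is a little slicker in that no after-the-fact perturbation is needed, while yours avoids relying on any metric regularity of the balls.
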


We specialize to linear functionals $\Phi\left(  \nu\right)  =\int\phi d\nu$,
$\phi:S\rightarrow{\mathbb{R}},$ i.e.%
\begin{equation}
Z_{N}=2^{-N}\sum_{\alpha}\operatorname{exp}\left[  \sum_{i=1}^{N}%
\phi(X_{\alpha,i})\right] \label{PartFu_linear}%
\end{equation}
In order that $\Phi$ is continuous, we have to assume that $\phi$ is bounded
and continuous, a condition we want to relax somewhat. By a slight abuse of
notation, we write $f_{N}(\phi,\mu)$ for the free energy of the finite-size
system, and $f(\phi,\mu)$ for its limit, which by Theorem
\ref{main_theorem_abstract} is given through
\begin{equation}
f(\phi,\mu)=\sup\left\{  \int\phi(x)\nu(dx)-H(\nu\mid\mu):\;H(\nu\mid\mu
)\leq\log2\right\}  ,\label{gvp}%
\end{equation}
at least if $\phi$ is bounded and continuous. We shall refer to expression
\eqref {gvp} as the Gibbs variational principle (GVP). Let us write for a
distribution $\nu\in{{\mathcal{M}}}_{1}^{+}(S)$, and $h:S\rightarrow
{\mathbb{R}}$, $E_{\nu}[h]\overset{{\text{def}}}{=}\int h(x)\nu(dx)$, and for
$m\in{\mathbb{R}},\,\Gamma_{\phi}(m)\overset{\mathrm{def}}{=}\log E_{\mu
}\left[  \mathrm{e}^{m\phi}\right]  $, which we always assume to exist. We
also define the probability measure $G_{m}$ on $S$ by%
\begin{equation}
\frac{dG_{m}}{d\mu}\overset{\mathrm{def}}{=}\frac{\mathrm{e}^{m\phi}}{Z\left(
m\right)  },\label{Def_Gm}%
\end{equation}
$Z\left(  m\right)  $ is the appropriate norming constant.

\begin{theorem}
\label{solving_gvp} Assume $\phi:S\rightarrow\mathbb{R}$ is continuous, and
satisfies%
\begin{equation}
\int\mathrm{e}^{\lambda\phi}d\mu<\infty\label{Cond_ExponentialMoment}%
\end{equation}
for all real $\lambda.$ Then%
\begin{equation}
\lim_{N\rightarrow\infty}f_{N}\left(  \phi,\mu\right)  =f\left(  \phi
,\mu\right)  ,\label{VarFormula_LinearUnbounded}%
\end{equation}
$f$ given by (\ref{gvp}). Furthermore, there exists a unique maximizer of the
right hand side of (\ref{gvp}) in the form $G_{m_{\ast}}$ where $m_{\ast}%
\in(0,1]$ is characterized as follows: If
\begin{equation}
\Gamma_{\phi}^{\prime}(1)-\Gamma_{\phi}(1)\leq\log2,\label{high_temperature}%
\end{equation}
then $m_{\star}=1$. Otherwise $m_{\star}\in(0,1)$ is the unique solution to
the following equation:
\begin{equation}
m\Gamma_{\phi}^{\prime}(m)-\Gamma_{\phi}(m)=\log2.\label{entropy_condition}%
\end{equation}

\end{theorem}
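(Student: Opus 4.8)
The plan is to establish the two assertions separately: the limit identity \eqref{VarFormula_LinearUnbounded} by truncating $\phi$ and invoking Theorem~\ref{main_theorem_abstract}, and the description of the maximizer by a Lagrange-multiplier argument resting on the Donsker--Varadhan variational formula. For the lower bound of \eqref{VarFormula_LinearUnbounded}, set $\phi^{(K)}:=\min(\phi,K)$ for $K>0$, which is bounded and continuous with $\phi^{(K)}\uparrow\phi$, so that $f_N(\phi^{(K)},\mu)\le f_N(\phi,\mu)$. Theorem~\ref{main_theorem_abstract} then gives $f(\phi^{(K)},\mu)=\lim_{N}f_N(\phi^{(K)},\mu)\le\liminf_{N}f_N(\phi,\mu)$, and monotone convergence in the supremum \eqref{gvp} (for each feasible $\nu$ one has $\int\phi^{(K)}\,d\nu\uparrow\int\phi\,d\nu$) shows $f(\phi^{(K)},\mu)\uparrow f(\phi,\mu)$; hence $\liminf_{N}f_N(\phi,\mu)\ge f(\phi,\mu)$. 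Note $f(\phi,\mu)$ is finite: feasibility of $\mu$ gives $f(\phi,\mu)\ge\int\phi\,d\mu>-\infty$, while the entropy inequality gives $\int\phi\,d\nu-H(\nu\mid\mu)\le\log\int\mathrm{e}^{\phi}\,d\mu=\Gamma_{\phi}(1)<\infty$ for every feasible $\nu$.

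For the upper bound, fix rationals $p>1$ with conjugate exponent $q$, and $K>0$. Writing $\phi=\phi^{(K)}+(\phi-K)^{+}$ and applying Hölder's inequality to the uniform weights $2^{-N}$ on $\{1,\dots,2^{N}\}$,
\[
f_N(\phi,\mu)\ \le\ \frac1p\,f_N\bigl(p\phi^{(K)},\mu\bigr)+\frac1{qN}\log\Bigl(2^{-N}\sum_{\alpha}\exp\Bigl[q\sum_{i=1}^{N}(\phi-K)^{+}(X_{\alpha,i})\Bigr]\Bigr).
\]
By Theorem~\ref{main_theorem_abstract} the first term tends, as $N\to\infty$, to $\frac1p f(p\phi^{(K)},\mu)$. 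The expectation of the bracket in the second term equals $c_K^{N}$ with $c_K:=\int\mathrm{e}^{q(\phi-K)^{+}}\,d\mu$, so Markov's inequality and Borel--Cantelli bound its $\limsup_{N}$ by $\frac1q\log c_K$ almost surely. Letting $K\to\infty$, monotone convergence in \eqref{gvp} raises the first bound to $\frac1p f(p\phi,\mu)$ (finite, since $p\phi$ again satisfies \eqref{Cond_ExponentialMoment}), while $\mathrm{e}^{q(\phi-K)^{+}}\le1+\mathrm{e}^{q\phi}\in L_{1}(\mu)$ and dominated convergence give $c_K\to1$; hence, almost surely, $\limsup_{N}f_N(\phi,\mu)\le\frac1p f(p\phi,\mu)$ for every rational $p>1$. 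Finally, for any $\nu$ with $H(\nu\mid\mu)\le\log2$,
\[
\int\phi\,d\nu-\frac1p H(\nu\mid\mu)=\Bigl(\int\phi\,d\nu-H(\nu\mid\mu)\Bigr)+\Bigl(1-\frac1p\Bigr)H(\nu\mid\mu)\le f(\phi,\mu)+\Bigl(1-\frac1p\Bigr)\log2 ,
\]
so $\frac1p f(p\phi,\mu)\le f(\phi,\mu)+(1-\frac1p)\log2$; letting $p\downarrow1$ along rationals yields $\limsup_{N}f_N(\phi,\mu)\le f(\phi,\mu)$, which together with the lower bound proves \eqref{VarFormula_LinearUnbounded}. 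This last elementary inequality on the feasible set is what replaces a minimax argument one might otherwise expect here.

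Now the maximizer. Under \eqref{Cond_ExponentialMoment} the function $\Gamma_{\phi}$ is smooth on $\mathbb{R}$ with $\Gamma_{\phi}'(m)=E_{G_m}[\phi]$ and $\Gamma_{\phi}''(m)=\mathrm{Var}_{G_m}(\phi)\ge0$. The Donsker--Varadhan identity $\log\int\mathrm{e}^{\psi}\,d\mu=\sup_{\nu}\{\int\psi\,d\nu-H(\nu\mid\mu)\}$, valid whenever $\mathrm{e}^{\psi}\in L_{1}(\mu)$ and uniquely attained at the tilted law $\mathrm{e}^{\psi}\,d\mu/\int\mathrm{e}^{\psi}\,d\mu$, applied to $\psi=m\phi$ says that $G_m$ uniquely attains $\Gamma_{\phi}(m)$ in $\sup_{\nu}\{m\int\phi\,d\nu-H(\nu\mid\mu)\}$; moreover
\[
H(G_m\mid\mu)=\int\bigl(m\phi-\Gamma_{\phi}(m)\bigr)\,dG_m=m\Gamma_{\phi}'(m)-\Gamma_{\phi}(m)=:I(m).
\]
Since $I'(m)=m\Gamma_{\phi}''(m)\ge0$ and $I(0)=0$, the function $I$ is nondecreasing on $[0,1]$, and strictly so unless $\phi$ is $\mu$-a.s.\ constant. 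If \eqref{high_temperature} holds, i.e.\ $I(1)=H(G_1\mid\mu)\le\log2$, then $G_1$ is feasible in \eqref{gvp} and, being the unconstrained maximizer, is the unique maximizer; put $m_{\star}=1$. If \eqref{high_temperature} fails, then $\phi$ is not $\mu$-a.s.\ constant, so $I$ is continuous and strictly increasing with $I(0)=0<\log2<I(1)$, yielding a unique $m_{\star}\in(0,1)$ with $I(m_{\star})=\log2$, which is precisely \eqref{entropy_condition}. That $G_{m_{\star}}$ is the constrained maximizer follows by applying Donsker--Varadhan to $m_{\star}\phi$: for every feasible $\nu$,
\[
\int\phi\,d\nu-H(\nu\mid\mu)\ \le\ \frac1{m_{\star}}\Gamma_{\phi}(m_{\star})+\Bigl(\frac1{m_{\star}}-1\Bigr)H(\nu\mid\mu)\ \le\ \frac1{m_{\star}}\Gamma_{\phi}(m_{\star})+\Bigl(\frac1{m_{\star}}-1\Bigr)\log2 ,
\]
where the first inequality is an equality only for $\nu=G_{m_{\star}}$, and since $H(G_{m_{\star}}\mid\mu)=\log2$ the second is then an equality too. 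Hence $G_{m_{\star}}$ is the unique maximizer, and $f(\phi,\mu)=(1-m_{\star})\Gamma_{\phi}'(m_{\star})+\Gamma_{\phi}(m_{\star})$.

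The only genuinely delicate point is the upper bound in \eqref{VarFormula_LinearUnbounded}: the $2^{N}$ exponentially many, weakly controlled summands force one to peel off the heavy-tail part of $\phi$ by a Hölder split, to bound its contribution almost surely by a first-moment estimate, and then to carry out the limits $K\to\infty$ and $p\downarrow1$. The characterization of the maximizer is then routine convex analysis once the identity $H(G_m\mid\mu)=m\Gamma_{\phi}'(m)-\Gamma_{\phi}(m)$ and the variance formula for $I'$ are in place.
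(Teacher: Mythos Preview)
Your treatment of the maximizer is correct and essentially the paper's argument in different clothing: both rest on the identity $H(G_m\mid\mu)=m\Gamma_\phi'(m)-\Gamma_\phi(m)$ and a comparison inequality, you via the Donsker--Varadhan formula for $\Gamma_\phi(m_\star)$, the paper by expanding $H(\nu\mid G_{m_\star})$ directly.

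The limit identity \eqref{VarFormula_LinearUnbounded}, however, has a gap. You invoke Theorem~\ref{main_theorem_abstract} for $\phi^{(K)}=\min(\phi,K)$ (and for $p\phi^{(K)}$), but that theorem requires the linear functional $\nu\mapsto\int\phi^{(K)}\,d\nu$ to be continuous on $\mathcal{M}_1^+(S)$, i.e.\ $\phi^{(K)}$ bounded. Your one-sided truncation leaves $\phi^{(K)}$ unbounded below whenever $\phi$ is, so Theorem~\ref{main_theorem_abstract} does not apply as stated. Concretely, the lower-bound half of its proof needs lower semicontinuity of $\Phi$ at the target $\nu$, and for $\phi^{(K)}$ bounded only from above one has only upper semicontinuity: an empirical measure $L_{N,\alpha}$ weakly close to $\nu$ may still place a little mass far in the lower tail and drive $\int\phi^{(K)}\,dL_{N,\alpha}$ well below $\int\phi^{(K)}\,d\nu$. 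Your H\"older split handles the upper tail $(\phi-K)^+$ but never touches the lower tail.

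The fix is to truncate on both sides, $\phi_M:=\min(M,\max(\phi,-M))$, so that Theorem~\ref{main_theorem_abstract} genuinely applies. One can then either run your H\"older/Borel--Cantelli argument symmetrically on $(-\phi-M)^+$, or do what the paper does: use \eqref{Cond_ExponentialMoment} to choose $M$ so that $\mathbb{P}\bigl(\bigl|\sum_i(\phi-\phi_M)(X_{\alpha,i})\bigr|\ge\varepsilon N\bigr)\le e^{-KN}$ with $K>\log2$, whence with high probability \emph{no} $\alpha\le 2^N$ violates this bound and the two partition functions differ only by $e^{\pm\varepsilon N}$. The paper's route is shorter since it sandwiches both tails at once and avoids your $p\downarrow1$ limit, but your H\"older idea would work once the lower tail is also addressed.
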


If $m_{\ast}=1,$ i.e. (\ref{high_temperature}) holds, we say the model is in
\textit{high temperature}, and otherwise in \textit{low temperature}.

For the Sherrington-Kirpatrick model the free energy was originally obtained
by Parisi using the replica method, and a special ansatz for the so-called
\textquotedblleft replica symmetry breaking\textquotedblright. The physical
content of Parisi's functional is still somewhat mysterious despite of
considerable progress made later. In our setting, the nonrigorous
RSB-mechanism would yield the following free energy for a spin glass of the
form (\ref{PartFu_linear}) as
\begin{equation}
\operatorname{Parisi}\left(  \phi,\mu\right)  \overset{\mathrm{def}}%
{=}\operatorname{inf}_{m\in\lbrack0,1]}\left\{  {\frac{\log2}{m}}+{\frac{1}%
{m}}\log E_{\mu}\mathrm{e}^{m\phi}-\log2\right\}  ,\label{FreeEnergy_Parisi}%
\end{equation}
The fact that one takes the infimum instead of the usual supremum in the Gibbs
formalism is at first sight rather puzzling. However, in our setup, the
identification of (\ref{gvp}) with the right-hand side of
(\ref{FreeEnergy_Parisi}) will be rather straightforward, and we have

\begin{theorem}
\label{equiv_PVP_GVP}
\[
f(\phi,\mu)=\operatorname{Parisi}\left(  \phi,\mu\right)
\]

\end{theorem}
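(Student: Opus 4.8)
The plan is to recognize the Parisi expression \eqref{FreeEnergy_Parisi} as the Lagrangian dual of the constrained maximization \eqref{gvp}: the scalar constraint $H(\nu\mid\mu)\le\log2$ carries a multiplier $\lambda\ge0$, and the substitution $m=1/(1+\lambda)\in(0,1]$ turns the dual problem into the infimum over $m$ in \eqref{FreeEnergy_Parisi}. Thus one direction is just weak duality (the Gibbs entropy inequality), and the reverse inequality is obtained by feeding into \eqref{gvp} the explicit maximizer $G_{m_\ast}$ supplied by Theorem \ref{solving_gvp}. This viewpoint also makes transparent why an infimum appears in \eqref{FreeEnergy_Parisi}: it is an infimum over a dual variable.

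For the bound $f(\phi,\mu)\le\operatorname{Parisi}(\phi,\mu)$, I would fix $m\in(0,1]$ and any $\nu$ with $H(\nu\mid\mu)\le\log2$, and apply the Gibbs inequality $E_\nu[m\phi]\le H(\nu\mid\mu)+\log E_\mu[\mathrm{e}^{m\phi}]$, whose right-hand side is finite by \eqref{Cond_ExponentialMoment}. Dividing by $m$ and using $H(\nu\mid\mu)\ge0$, $1-m\ge0$, and $H(\nu\mid\mu)\le\log2$ gives
\[
E_\nu[\phi]-H(\nu\mid\mu)\;\le\;\frac{1-m}{m}\,H(\nu\mid\mu)+\frac1m\log E_\mu[\mathrm{e}^{m\phi}]\;\le\;\frac{\log2}{m}+\frac1m\log E_\mu[\mathrm{e}^{m\phi}]-\log2 .
\]
Taking the supremum over feasible $\nu$ and the infimum over $m\in(0,1]$ yields $f(\phi,\mu)\le\operatorname{Parisi}(\phi,\mu)$; the missing endpoint $m=0$ is harmless because the right-hand side tends to $+\infty$ as $m\downarrow0$ (here one uses finiteness of $E_\mu[\phi]$, again from \eqref{Cond_ExponentialMoment}). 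One could alternatively deduce equality at once from strong duality, since Slater's condition holds ($\mu$ itself is strictly feasible, $H(\mu\mid\mu)=0<\log2$), but using the explicit maximizer below keeps the argument self-contained.

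For the reverse bound I would invoke Theorem \ref{solving_gvp}: the supremum in \eqref{gvp} is attained at $G_{m_\ast}$ with $m_\ast\in(0,1]$, so $f(\phi,\mu)=E_{G_{m_\ast}}[\phi]-H(G_{m_\ast}\mid\mu)$. Differentiating $\Gamma_\phi$ gives $E_{G_m}[\phi]=\Gamma_\phi'(m)$, while \eqref{Def_Gm} gives $H(G_m\mid\mu)=m\,\Gamma_\phi'(m)-\Gamma_\phi(m)$; hence $f(\phi,\mu)=(1-m_\ast)\,\Gamma_\phi'(m_\ast)+\Gamma_\phi(m_\ast)$. If \eqref{entropy_condition} holds, substituting $\Gamma_\phi(m_\ast)=m_\ast\Gamma_\phi'(m_\ast)-\log2$ gives $f(\phi,\mu)=\Gamma_\phi'(m_\ast)-\log2=\frac{\log2}{m_\ast}+\frac1{m_\ast}\Gamma_\phi(m_\ast)-\log2$; if \eqref{high_temperature} holds then $m_\ast=1$ and $f(\phi,\mu)=\Gamma_\phi(1)$, which is once more the value of the Parisi bracket at $m=1$. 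In either case $f(\phi,\mu)$ equals the Parisi bracket evaluated at some $m_\ast\in(0,1]$, so $f(\phi,\mu)\ge\operatorname{Parisi}(\phi,\mu)$ by definition of the infimum; together with the first bound this proves the theorem. There is no genuine obstacle here — the substance is Theorem \ref{solving_gvp}, already available — and one even obtains as a by-product that $m_\ast$ is the minimizer of the Parisi bracket; if one wants to see this directly, it follows because $m\mapsto m\Gamma_\phi'(m)-\Gamma_\phi(m)$ is nondecreasing (its derivative $m\Gamma_\phi''(m)$ is nonnegative by convexity of $\Gamma_\phi$) and vanishes at $m=0$, so the derivative $m^{-2}\bigl(m\Gamma_\phi'(m)-\Gamma_\phi(m)-\log2\bigr)$ of the Parisi bracket changes sign exactly once, at $m_\ast$.
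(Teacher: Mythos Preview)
Your proof is correct and follows essentially the same route as the paper: both arguments hinge on Theorem~\ref{solving_gvp}, compute $f(\phi,\mu)=(1-m_\ast)\Gamma_\phi'(m_\ast)+\Gamma_\phi(m_\ast)$, and verify that this equals the Parisi bracket evaluated at $m_\ast$ (via the identity $\Psi(m_\ast)=\Gamma_\phi'(m_\ast)-\log 2$ in the low-temperature case, and $\Psi(1)=\Gamma_\phi(1)$ in the high-temperature case).

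The organizational difference is that you split the argument into two inequalities, proving $f\le\operatorname{Parisi}$ directly from the Donsker--Varadhan/Gibbs inequality $E_\nu[m\phi]\le H(\nu\mid\mu)+\Gamma_\phi(m)$, whereas the paper instead analyzes the derivative $\Psi'(m)=m^{-2}\{H(G_m\mid\mu)-\log 2\}$ to locate the minimizer of $\Psi$ and then checks that the minimum value coincides with $f(\phi,\mu)$. Your version has the advantage that the upper bound stands on its own (independent of Theorem~\ref{solving_gvp}) and makes the Lagrangian-duality interpretation explicit---the Parisi parameter $m$ is precisely $1/(1+\lambda)$ for the multiplier $\lambda$ of the entropy constraint---which clarifies why an infimum appears. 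The paper's version, on the other hand, extracts the useful identity $\Psi'(m)=m^{-2}\{H(G_m\mid\mu)-\log 2\}$ directly, which you only reach at the end as a by-product. Both are short and rely on the same substance.
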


We learned from Guerra \cite{guerrayep} a simple argument how to prove that
$f\left(  \phi,\mu\right)  $ is bounded by (\ref{FreeEnergy_Parisi}): For
$m\in\lbrack0,1]$,
\begin{align*}
f_{N}(\phi,\mu)  & ={\frac{1}{N}}\log\left(  2^{-N}\sum_{\alpha}\exp\left[
\sum\nolimits_{i}\phi(X_{\alpha,i})\right]  \right) \\
& ={\frac{1}{mN}}\log\left(  2^{-N}\sum_{\alpha}\exp\left[  \sum
\nolimits_{i}\phi(X_{\alpha,i})\right]  \right)  ^{m}\\
& \leq{\frac{1}{mN}}\log\left(  2^{-mN}\sum_{\alpha}\exp\left[  m\sum
\nolimits_{i}\phi(X_{\alpha,i})\right]  \right)  ,
\end{align*}
where the last bound follows by straightforward convexity/concavity arguments.
Taking expectation w.r.t. the randomness, exploiting concavity of the
logarithm, the independence of the random variables appearing in the sum
$\sum_{i}\phi(X_{\alpha,i})$, and optimizing over $m\in\lbrack0,1]$ one easily
gets
\begin{equation}
{\mathbb{E}}f_{N}(\phi,\mu)\leq\operatorname{Parisi}\left(  \phi,\mu\right)
,\label{upper_parisi}%
\end{equation}
uniformly in $N$. We will not use that, and we will give another proof of Theorem
\ref{equiv_PVP_GVP} in Section \ref{SubSect_FreeEnergy}.

As usual, the Gibbs measure is defined by%
\[
{{\mathcal{G}}}_{\Phi,N}(\alpha)\overset{\mathrm{def}}{=}\frac{2^{-N}%
\operatorname{exp}\left[  N\Phi(L_{N,\alpha})\right]  }{Z_{N}},\ 1\leq
\alpha\leq2^{N}.
\]
We analyze this only in the linear case $\Phi\left(  \nu\right)  =\int\phi
d\nu.$ By an abuse of notation, we write it simply as ${{\mathcal{G}}}%
_{\phi,N}(\alpha).$

We recall the definition of the Poisson-Dirichlet point process with parameter
$m\in\left(  0,1\right)  .$ We first consider a Poisson point process on
$\mathbb{R}^{+}$ with intensity measure $t^{-m-1}dt.$ We call such a point
process a $\operatorname{PP}\left(  m\right)  .$ This point process has
countably many single points with a maximal element. If we order the points
downwards, we obtain a sequence of random variables $\xi_{1}>\xi_{2}>\cdots$ .
If $m<1,$ then $\zeta\overset{\mathrm{def}}{=}\sum_{i}\xi_{i}<\infty,$ almost
surely, and we can define $\overline{\xi}_{i}\overset{\mathrm{def}}{=}\xi
_{i}/\zeta.$ Then $\sum_{i}\delta_{\overline{\xi}_{i}} $ is a
Poisson-Dirichlet point process with parameter $m.$ We write
$\operatorname{PD}\left(  m\right)  $ for such a point process.

\begin{theorem}
\label{limiting_gibbs_measure} Suppose that $\phi,\mu$ are such that the
system is in low temperature, i.e. $m_{\ast}<1.$ ($m_{\ast}$ the unique
solution to the entropy condition (\ref{entropy_condition})). Assume
furthermore that the distribution of $\phi$ under $\mu$ is non-lattice. Then
the point process $\sum_{\alpha}\delta_{{{\mathcal{G}}}_{\phi,N}(\alpha)}$
converges weakly as $N\rightarrow\infty$ to a $\operatorname{PD}\left(
m_{\star}\right)  $.
\end{theorem}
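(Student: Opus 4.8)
The plan is to show that the normalized Gibbs masses $\mathcal{G}_{\phi,N}(\alpha)$ are asymptotically the points of a Poisson-Dirichlet process by tracking the underlying (unnormalized) energies. Write $H_{N,\alpha} \overset{\mathrm{def}}{=} \sum_{i=1}^N \phi(X_{\alpha,i})$, so that $\mathcal{G}_{\phi,N}(\alpha) = \mathrm{e}^{H_{N,\alpha}} / \sum_\beta \mathrm{e}^{H_{N,\beta}}$. The first step is to identify the correct centering: by the large-deviation heuristics behind Theorem~\ref{solving_gvp}, the relevant energies are those near $H_{N,\alpha} \approx N \Gamma_\phi'(m_\star)$, i.e. those $\alpha$ achieving the near-maximal value of $H_{N,\alpha} - $ (nothing), since there are $2^N$ of them and the entropy condition \eqref{entropy_condition} says precisely that the number of $\alpha$ with $H_{N,\alpha} \geq Nx$ is $\exp(N(\log 2 - I(x)) + o(N))$, with the rate function $I$ the Legendre transform of $\Gamma_\phi$, and $I(\Gamma_\phi'(m_\star)) = \log 2 - $ [the correction]. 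Concretely I would set $a_N \overset{\mathrm{def}}{=} N\Gamma_\phi'(m_\star) + c\log N$ for the appropriate constant $c$ (coming from the polynomial correction in the local CLT / Bahadur-Rao sharp large deviation estimate), so that the shifted energies $H_{N,\alpha} - a_N$ have a non-degenerate point-process limit.

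The second and central step is to prove that the point process $\sum_\alpha \delta_{H_{N,\alpha} - a_N}$ converges weakly to a Poisson point process on $\mathbb{R}$ with intensity $K\, \mathrm{e}^{-m_\star t}\,dt$ for a suitable constant $K>0$. This is where the non-lattice hypothesis on $\phi$ under $\mu$ enters: it is needed for a sharp (Bahadur--Rao type) large deviation expansion $\mathbb{P}(H_{N,\alpha} \geq Nx) \sim \frac{C(x)}{\sqrt N}\mathrm{e}^{-N I(x)}$ with the right constant, and more importantly for the analogous estimate on the probability that $H_{N,\alpha}$ lands in an interval $[Nx + s, Nx + t]$, which must behave like $\mathrm{e}^{-NI(x)}$ times $(\mathrm{e}^{-m_\star s} - \mathrm{e}^{-m_\star t})$ up to the polynomial factor — the exponential tilt rate being exactly $I'(x) = m_\star$ at $x = \Gamma_\phi'(m_\star)$. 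The lattice case would produce an atomic limit, hence the assumption. Given such local estimates, convergence to the Poisson process follows from the standard criterion: compute $\mathbb{E}[\#\{\alpha : H_{N,\alpha} - a_N \in [s,t]\}] = 2^N \mathbb{P}(H_{N,\alpha} - a_N \in [s,t]) \to \int_s^t K\mathrm{e}^{-m_\star u}\,du$, and check that factorial moments factorize, which is immediate here because the $2^N$ variables $H_{N,\alpha}$ are i.i.d. across $\alpha$ (only the $X_{\alpha,i}$ within a fixed $\alpha$ are summed), so no correlation structure needs to be controlled — a genuine simplification compared to the GREM or SK.

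The third step is to pass from the energy point process to the Gibbs-mass point process. Since $\eta_\alpha \overset{\mathrm{def}}{=} H_{N,\alpha} - a_N$ converges to $\mathrm{PP}(K\mathrm{e}^{-m_\star t}dt)$, the exponentials $\mathrm{e}^{\eta_\alpha}$ converge to a $\mathrm{PP}(m)$-type process: a change of variables $u = \mathrm{e}^t$ turns intensity $K\mathrm{e}^{-m_\star t}dt$ into $K' u^{-m_\star - 1}du$, which up to the irrelevant constant $K'$ (absorbable by scaling, which does not change the normalized masses) is exactly the intensity defining $\mathrm{PP}(m_\star)$. Then $\mathcal{G}_{\phi,N}(\alpha) = \mathrm{e}^{\eta_\alpha}/\sum_\beta \mathrm{e}^{\eta_\beta}$, and since $m_\star < 1$ the sum $\sum_\beta \mathrm{e}^{\eta_\beta}$ converges a.s.\ to the finite total mass $\zeta$ of the limiting $\mathrm{PP}(m_\star)$; by the continuous-mapping/normalization property this is precisely the construction of $\mathrm{PD}(m_\star)$ recalled just before the theorem. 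The one technical point to be careful about here is uniform integrability of the tails: I must show $\sum_{\beta : \eta_\beta < -M} \mathrm{e}^{\eta_\beta}$ is uniformly small in $N$ for large $M$, i.e.\ that no non-negligible fraction of the Gibbs mass escapes to energies far below the top — this follows from the same large-deviation bound ($\mathbb{E}\sum_{\beta} \mathrm{e}^{\eta_\beta}\mathbf{1}_{\eta_\beta < -M}$ is $O(\mathrm{e}^{-(1-m_\star)M})$ when $m_\star<1$), and it is the main obstacle in making the argument rigorous, together with establishing the Bahadur--Rao local estimates with the correct constants under the non-lattice assumption.
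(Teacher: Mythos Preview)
Your proposal is correct and follows essentially the same route as the paper: center at $a_N = N\Gamma_\phi'(m_\star) + \omega(N)$ (the paper gives $\omega(N) = -\frac{1}{m_\star}\log\sqrt{2\pi v^2 N}$ explicitly), use a sharp tail estimate under the non-lattice assumption (the paper does this via exponential tilting plus the Edgeworth expansion, which is exactly your Bahadur--Rao step) to obtain $2^N\mathbb{P}(H_{N,1}-a_N\geq t)\to m_\star^{-1}\mathrm{e}^{-m_\star t}$, deduce Poisson convergence from the i.i.d.\ structure, exponentiate to $\mathrm{PP}(m_\star)$, and then pass to $\mathrm{PD}(m_\star)$ after proving the two-sided tail bound $\mathbb{P}\big[\sum_\alpha \mathrm{e}^{H_N(\alpha)-a_N}\mathbf{1}_{|H_N(\alpha)-a_N|>C}\geq\varepsilon\big]\leq\varepsilon$, which is precisely your uniform-integrability concern. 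The only cosmetic differences are that the paper verifies Poisson convergence via Laplace functionals rather than factorial moments, and it packages the final normalization step as continuity of the map $(\Xi,a)\mapsto \Xi\theta_a^{-1}$ on Radon measures rather than invoking continuous mapping in words.
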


Remark that Theorem \ref{limiting_gibbs_measure} accounts for some
universality of the Derrida-Ruelle structures and the so-called
Poisson-Dirichlet distribution, which naturally arise in the weak limits of
the Gibbs measure associated to a REM-system in low temperature.

\section{The REM+Cavity model\label{REM+Cavity}}

We give an application of the previous results. Let again $N\in{\mathbb{N}}$.
We set $\Sigma_{N}\overset{{\text{def}}}{=}\{1,\dots,2^{N}\}$ and consider on
some probability space $(\Omega,{\mathcal{F}},{\mathbb{P}})$ a sequence
$(X_{\alpha},\alpha\in\Sigma_{N})$ of independent, centered Gaussians with
variance $N$, as well as another independent sequence $(g_{\alpha,i},\alpha
\in\Sigma_{N},i=1,\dots,N)$ of independent standard Gaussians. For $\alpha
\in\Sigma_{N},\sigma=(\sigma_{1},\dots,\sigma_{N})\in\{\pm1\}^{N}$, we define
the Hamiltonian of the REM+Cavity-model.%
\begin{equation}
H(\alpha,\sigma)\overset{{\text{def}}}{=}X_{\alpha}+\sum_{i=1}^{N}g_{\alpha
,i}\sigma_{i}.\label{hamiltonian_remcavity}%
\end{equation}
$H(\cdot,\cdot)$ is thus a Gaussian field on $\Sigma_{N}\times\{\pm1\}^{N}$
with covariance given by
\[
{\mathbb{E}}[H(\alpha,\sigma)H(\alpha,\sigma^{\prime})]=N\delta_{\alpha
=\alpha^{\prime}}+N\delta_{\alpha=\alpha^{\prime}}q(\sigma,\sigma^{\prime}),
\]
where $q(\sigma,\sigma^{\prime})\overset{{\text{def}}}{=}{\frac{1}{N}}%
\sum_{i=1}^{N}\sigma_{i}\sigma_{i}^{\prime}$ is the usual overlap of the
configurations $\sigma,\sigma^{\prime}$. For $\beta\in{\mathbb{R}}$, the
inverse of the temperature, we define the free energy%
\begin{equation}
f_{N}(\beta)\overset{{\text{def}}}{=}{\frac{1}{N}}\log\left[  2^{-2N}%
\sum_{\alpha,\sigma}\operatorname{exp}\left(  \beta H(\alpha,\sigma)\right)
\right]  .\label{free_energy_remcavity}%
\end{equation}

\begin{proposition}
\label{propvariational} The limit $f(\beta)=\lim_{N\rightarrow\infty}%
f_{N}(\beta)$ exists ${\mathbb{P}}$-a.s. and is given by%
\begin{equation}
f(\beta)=\left\{
\begin{array}
[c]{cc}%
{\beta^{2}} & \mathrm{if\ }\beta\leq\beta_{\mathrm{cr}}\\
{\frac{\beta^{2}}{2}}m_{\star}+\frac{E[\cosh(\beta g)^{m_{\star}}\log
\cosh(\beta g)]}{E[\cosh(\beta g)^{m_{\star}}]}-{\log2} & \mathrm{if\ }%
\beta>\beta_{\mathrm{cr}}%
\end{array}
\right. \label{FreeEnergy_REMC}%
\end{equation}
with $\beta_{\mathrm{cr}}>0$ being the unique positive solutions of the
equation
\begin{equation}
E[\cosh(\beta g)\log\cosh(\beta g)]=\mathrm{e}^{\beta^{2}/2}\log
2,\label{critical_temperature}%
\end{equation}
and for $\beta>\beta_{\mathrm{cr}},$ $m_{\star}=m_{\star}(\beta)\in(0,1)$ is
the unique solution of
\begin{equation}
{\frac{\beta^{2}}{2}}m^{2}-\log E[\cosh(\beta g)^{m}]+m\frac{E[\cosh(\beta
g)^{m}\log\cosh(\beta g)]}{E[\cosh(\beta g)^{m}]}=\log
2.\label{entropy_condition_remcavity}%
\end{equation}

\end{proposition}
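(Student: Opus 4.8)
The plan is to view \eqref{free_energy_remcavity} as a special case of the REM-type free energy \eqref{PartFu_linear} and then read everything off from Theorems~\ref{main_theorem_abstract}--\ref{equiv_PVP_GVP}. Summing over $\sigma$ first, $\sum_{\sigma}\exp\bigl(\beta\sum_{i}g_{\alpha,i}\sigma_{i}\bigr)=2^{N}\prod_{i=1}^{N}\cosh(\beta g_{\alpha,i})$, so
\[
2^{-2N}\sum_{\alpha,\sigma}\mathrm{e}^{\beta H(\alpha,\sigma)}=2^{-N}\sum_{\alpha}\mathrm{e}^{\beta X_{\alpha}}\prod_{i=1}^{N}\cosh(\beta g_{\alpha,i}).
\]
Since $X_{\alpha}\sim\mathcal{N}(0,N)$ has the law of $\sum_{i=1}^{N}\widetilde g_{\alpha,i}$ with $(\widetilde g_{\alpha,i})$ an independent family of standard Gaussians, and this substitution does not change the joint law of $\bigl(X_{\alpha},(g_{\alpha,i})_{i}\bigr)_{\alpha}$ --- hence not the law of the sequence $(f_{N})_{N}$, which is all that matters for its a.s.\ limit --- we may take $S=\mathbb{R}^{2}$, $\mu=\mathcal{N}(0,1)^{\otimes2}$, $X_{\alpha,i}=(\widetilde g_{\alpha,i},g_{\alpha,i})$ and $\phi(x,y)=\beta x+\log\cosh(\beta y)$, so that the right-hand side above is exactly $Z_{N}$ of \eqref{PartFu_linear} and $f_{N}(\beta)=f_{N}(\phi,\mu)$.

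Now $\phi$ is continuous and $\int\mathrm{e}^{\lambda\phi}d\mu=\mathrm{e}^{\lambda^{2}\beta^{2}/2}E[\cosh(\beta g)^{\lambda}]<\infty$ for every real $\lambda$ (because $\cosh(\beta y)^{\lambda}$ grows at most like $\mathrm{e}^{|\lambda\beta y|}$), so Theorem~\ref{solving_gvp} applies: the limit $f(\beta)=f(\phi,\mu)$ exists $\mathbb{P}$-a.s., is non-random, and is attained at $\nu=G_{m_{\star}}$. Using independence of the two coordinates under $\mu$,
\[
\Gamma_{\phi}(m)=\log E_{\mu}\bigl[\mathrm{e}^{m\phi}\bigr]=\frac{m^{2}\beta^{2}}{2}+\log E\bigl[\cosh(\beta g)^{m}\bigr],
\]
and since $E[\cosh(\beta g)]=\mathrm{e}^{\beta^{2}/2}$ we get $\Gamma_{\phi}(1)=\beta^{2}$. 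Substituting this $\Gamma_{\phi}$ into the high-temperature condition \eqref{high_temperature} and simplifying turns it into $E[\cosh(\beta g)\log\cosh(\beta g)]\le\mathrm{e}^{\beta^{2}/2}\log2$, while \eqref{entropy_condition} becomes verbatim \eqref{entropy_condition_remcavity}. In high temperature $f(\beta)=\Gamma_{\phi}(1)=\beta^{2}$; in low temperature Theorem~\ref{solving_gvp} gives $f(\beta)=E_{G_{m_{\star}}}[\phi]-H(G_{m_{\star}}\mid\mu)=\Gamma_{\phi}'(m_{\star})-\log2$, and inserting the expression for $\Gamma_{\phi}'$ produces the low-temperature line of \eqref{FreeEnergy_REMC}.

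The one point that is not bookkeeping is the claim that the two regimes are separated by a single threshold, i.e.\ that \eqref{critical_temperature} has a unique positive root $\beta_{\mathrm{cr}}$ with the high-temperature inequality holding exactly for $\beta\le\beta_{\mathrm{cr}}$. Equivalently, the function $\psi(\beta):=\Gamma_{\phi}'(1)-\Gamma_{\phi}(1)=\mathrm{e}^{-\beta^{2}/2}E[\cosh(\beta g)\log\cosh(\beta g)]$ must be strictly increasing on $(0,\infty)$, with $\psi(0)=0$ and $\psi(\beta)\to\infty$. A Gaussian change of variables (completing the square) gives the clean representation $\psi(\beta)=E[\log\cosh(\beta^{2}+\beta g)]$, from which $\psi(\beta)\ge\beta^{2}-\log2\to\infty$, and an integration by parts yields $\psi'(\beta)=2\beta\,E[\tanh(\beta^{2}+\beta g)]+\beta\,E\bigl[1-\tanh^{2}(\beta^{2}+\beta g)\bigr]$; both terms are nonnegative for $\beta\ge0$ and strictly positive for $\beta>0$ (the first since $\tanh(a+b)+\tanh(a-b)\ge0$ for $a\ge0$, used with $a=\beta^{2}$, $b=\pm\beta g$, together with $g\overset{d}{=}-g$). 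Finally, for $\beta>\beta_{\mathrm{cr}}$ the existence and uniqueness of $m_{\star}\in(0,1)$ solving \eqref{entropy_condition_remcavity} is part of Theorem~\ref{solving_gvp}; concretely it follows because $m\mapsto m\Gamma_{\phi}'(m)-\Gamma_{\phi}(m)$ is strictly increasing (derivative $m\Gamma_{\phi}''(m)>0$), vanishes at $m=0$, and equals $\psi(\beta)>\log2$ at $m=1$. I expect the monotonicity of $\psi$ to be the only genuinely non-routine step.
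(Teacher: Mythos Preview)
Your proof is correct and follows essentially the same route as the paper: trace out the Ising spins, write $X_\alpha$ as a sum of $N$ i.i.d.\ standard Gaussians, and apply Theorems~\ref{solving_gvp}--\ref{equiv_PVP_GVP} with $\phi(x_1,x_2)=\beta x_1+\log\cosh(\beta x_2)$ and $\mu$ the standard bivariate Gaussian. The only cosmetic difference is in the monotonicity argument for $\psi(\beta)=\Gamma_\phi'(1)-\Gamma_\phi(1)$: the paper differentiates $H(\beta)=\mathrm{e}^{-\beta^2/2}E[\cosh(\beta g)\log\cosh(\beta g)]$ directly and applies Gaussian integration by parts, whereas you first tilt to the representation $\psi(\beta)=E[\log\cosh(\beta^2+\beta g)]$ before differentiating---both computations are equally short and yield the same conclusion.
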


The mechanism behind this formula is easy to understand. Remark first that we
can write the partition function as%
\begin{align*}
2^{-2N}\sum_{\alpha,\sigma}\operatorname{exp}\left(  \beta H(\alpha
,\sigma)\right)   & =2^{-N}\sum_{\alpha}\mathrm{e}^{\beta X_{\alpha}}%
2^{-N}\sum_{\sigma}\exp\left[  \beta\sum\nolimits_{i=1}^{N}g_{\alpha,i}%
\sigma_{i}\right] \\
& =2^{-N}\sum_{\alpha}\mathrm{e}^{\beta X_{\alpha}}\prod\limits_{i=1}^{N}%
\cosh\left(  \beta g_{\alpha,i}\right) \\
& =2^{-N}\sum_{\alpha}\exp\left[  \beta X_{\alpha}+\sum\nolimits_{i=1}^{N}%
\log\cosh\left(  \beta g_{\alpha,i}\right)  \right] \\
& =2^{-N}\sum_{\alpha}\exp\left[  \beta X_{\alpha}+N\int\log\cosh\left(
\beta\right)  L_{N,\alpha}\left(  dx\right)  \right]  ,
\end{align*}
where%
\[
L_{N,\alpha}\overset{\mathrm{def}}{=}\frac{1}{N}\sum_{i=1}^{N}\delta
_{g_{\alpha,i}}.
\]
The probability that for a fixed $\alpha,$ we have $X_{\alpha}\approx yN,$ and
$L_{N,\alpha}\approx\nu$%
\[
\approx\exp\left[  -Ny^{2}/2-NH\left(  \nu\mid\mu\right)  \right], 
\]
$\mu$ being here the standard normal distribution. Arguing roughly in the
same way as before, we conclude that%
\begin{align}
f(\beta)  & =\sup_{y,\nu}\Big\{\beta y+\int\log\cosh\left(  \beta y\right)
\nu\left(  dy\right)  -y^{2}/2-H\left(  \nu\mid\mu\right)
\label{Varproblem_REM&Cav}\\
& \hspace{7cm} :y^{2}/2+H\left(  \nu\mid\mu\right)  \leq\log2\Big\},\nonumber
\end{align}
which leads to the expression in Proposition \ref{propvariational}.
$\beta_{\mathrm{cr}}$ is the value for which the restriction $y^{2}/2+H\left(
\nu\mid\mu\right)  \leq\log2$ becomes relevant in the supremum. An interesting
feature is that for any $\beta,$ the $\alpha$'s which are giving the main
contribution to the partition function are those for which%
\[
L_{N,\alpha}\approx\nu,
\]
$\nu$ being the maximizer in the variational problem. We will give a precise
derivation in Section \ref{SubSect_GibbsREMC}.

According to the convention following Theorem \ref{solving_gvp}, we call the
region $\beta\leq\beta_{\mathrm{cr}}$ the high-temperature, and $\beta
>\beta_{\mathrm{cr}}$ the low temperature regime. The associated Gibbs measure
is:%
\[
{\mathcal{G}}_{\beta,N}(\alpha,\sigma)={\frac{\operatorname{exp}\beta
H(\alpha,\sigma)}{\sum_{\alpha^{\prime},\sigma^{\prime}}\operatorname{exp}%
\beta H(\alpha^{\prime},\sigma^{\prime})}},\;{\text{for}}(\alpha,\sigma
)\in\Sigma_{N}\times\{\pm1\}^{N}.
\]
It is not difficult to realize that even in low temperature, the Gibbs weights
of individual configurations are exponentially small in $N.$ To get a
macroscopic weight we must lump together exponentially many configurations. In
the present situation, we have to take the marginal measure on the first
component:%
\[
{\mathcal{G}}_{\beta,N}^{\left(  1\right)  }(\alpha)\overset{{\text{def}}}%
{=}\sum_{\sigma\in\{\pm1\}^{N}}{\mathcal{G}}_{\beta,N}(\alpha,\sigma).
\]

\begin{proposition}
\label{point_process} If $\beta>\beta_{\mathrm{cr}}$, then the point process
$\sum_{\alpha}\delta_{{\mathcal{G}}_{\beta,N}^{\left(  1\right)  }(\alpha)}$
converges weakly to $\operatorname{PD}\left(  m_{\ast}\right)  $.
\end{proposition}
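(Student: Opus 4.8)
The plan is to reduce Proposition \ref{point_process} to Theorem \ref{limiting_gibbs_measure} by exhibiting the REM+Cavity model, after summing out the spin variable $\sigma$, as a linear REM-type model of the form (\ref{PartFu_linear}). First I would represent the REM variables as sums: write $X_{\alpha}=\sum_{i=1}^{N}\tilde g_{\alpha,i}$ with $(\tilde g_{\alpha,i})$ i.i.d.\ standard Gaussians independent of $(g_{\alpha,i})$, and set $Y_{\alpha,i}:=(\tilde g_{\alpha,i},g_{\alpha,i})\in S:=\mathbb{R}^{2}$, i.i.d.\ with law $\mu:=\mathcal{N}(0,1)^{\otimes 2}$. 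Summing (\ref{hamiltonian_remcavity}) over $\sigma$ and using $2^{-N}\sum_{\sigma}\exp[\beta\sum_{i}g_{\alpha,i}\sigma_{i}]=\prod_{i}\cosh(\beta g_{\alpha,i})$, exactly as in the display following Proposition \ref{propvariational}, gives
\[
2^{-2N}\sum_{\alpha,\sigma}\mathrm{e}^{\beta H(\alpha,\sigma)}=2^{-N}\sum_{\alpha}\exp\Big[\sum_{i=1}^{N}\phi(Y_{\alpha,i})\Big],\qquad \phi(\tilde g,g):=\beta\tilde g+\log\cosh(\beta g).
\]
Dividing numerator and denominator of $\mathcal{G}_{\beta,N}^{(1)}(\alpha)=\sum_{\sigma}\mathcal{G}_{\beta,N}(\alpha,\sigma)$ by $2^{N}$ shows that $\mathcal{G}_{\beta,N}^{(1)}(\alpha)=\mathcal{G}_{\phi,N}(\alpha)$ identically, so the point process in the statement is the one appearing in Theorem \ref{limiting_gibbs_measure} for this pair $(\phi,\mu)$.

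Second, I would check the hypotheses of Theorem \ref{limiting_gibbs_measure}. The function $\phi$ is continuous on $\mathbb{R}^{2}$, and since $\mathrm{e}^{\lambda\phi}=\mathrm{e}^{\lambda\beta\tilde g}\cosh(\beta g)^{\lambda}$ with $\cosh(\beta g)^{\lambda}\le \mathrm{e}^{|\lambda\beta g|}\vee 1$, the exponential moment condition (\ref{Cond_ExponentialMoment}) holds for every real $\lambda$, so Theorem \ref{solving_gvp} applies and the order parameter $m_{\star}$ is well defined. By independence of $\tilde g$ and $g$ one computes $\Gamma_{\phi}(m)=\tfrac{m^{2}\beta^{2}}{2}+\log E[\cosh(\beta g)^{m}]$; substituting this into the entropy condition (\ref{entropy_condition}) reproduces (\ref{entropy_condition_remcavity}) verbatim, and substituting it into the high-temperature condition (\ref{high_temperature}), together with $E[\cosh(\beta g)]=\mathrm{e}^{\beta^{2}/2}$, reduces (\ref{high_temperature}) to $E[\cosh(\beta g)\log\cosh(\beta g)]\le \mathrm{e}^{\beta^{2}/2}\log 2$. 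Hence, by (\ref{critical_temperature}) and the monotonicity of its left-hand side (already used in the proof of Proposition \ref{propvariational}), the model $(\phi,\mu)$ is in the high-temperature regime precisely when $\beta\le\beta_{\mathrm{cr}}$; in particular $\beta>\beta_{\mathrm{cr}}$ forces $m_{\star}(\beta)<1$, and the $m_{\star}$ of Theorem \ref{limiting_gibbs_measure} coincides with the one in Proposition \ref{propvariational}. Finally, the law of $\phi$ under $\mu$ is absolutely continuous on $\mathbb{R}$, being the convolution of the law of $\beta\tilde g$ (which has a density, as $\beta\neq 0$) with the law of $\log\cosh(\beta g)$; in particular it is non-lattice.

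With all hypotheses verified, Theorem \ref{limiting_gibbs_measure} applied to $(\phi,\mu)$ yields that $\sum_{\alpha}\delta_{\mathcal{G}_{\phi,N}(\alpha)}=\sum_{\alpha}\delta_{\mathcal{G}_{\beta,N}^{(1)}(\alpha)}$ converges weakly to $\operatorname{PD}(m_{\star})$, which is the claim. I do not expect a serious obstacle: essentially all the work is in the algebraic reduction above and in matching the two descriptions of $\beta_{\mathrm{cr}}$ and of $m_{\star}$. The only step needing a short separate argument is the equivalence between the abstract high-temperature inequality (\ref{high_temperature}) for $(\phi,\mu)$ and the sign of $\beta-\beta_{\mathrm{cr}}$, i.e.\ uniqueness of the positive root of (\ref{critical_temperature}) and monotonicity of its left-hand side, which is in any case already part of the proof of Proposition \ref{propvariational}.
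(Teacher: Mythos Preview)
Your proposal is correct and follows essentially the same route as the paper: sum out the Ising spins, write $X_\alpha$ as a sum of i.i.d.\ standard Gaussians so that the marginal Gibbs weights become those of a linear REM-type model with $\phi(x_1,x_2)=\beta x_1+\log\cosh(\beta x_2)$ and $\mu$ a standard bivariate Gaussian, and then invoke Theorem~\ref{limiting_gibbs_measure}. You are in fact more careful than the paper in explicitly verifying the exponential-moment, low-temperature, and non-lattice hypotheses; the paper simply points to Theorem~\ref{limiting_gibbs_measure} after the same reduction.
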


We thus witness in the low-temperature regime of the REM+Cavity the emergence
of massive pure states, with law being given by the Poisson-Dirichlet distribution.

We can also derive the limiting behavior of the overlaps under the replicated
Gibbs measure ${{\mathcal{G}}}_{\beta,N}^{\otimes2}$. Following the physicists
convention, we write $\left\langle \cdot\right\rangle _{\beta,N}^{\otimes2}$
for the expectation with respect to ${{\mathcal{G}}}_{\beta,N}^{\otimes2}.$
From the above proposition, it is clear that in the $N\rightarrow\infty$
limit, ${{\mathcal{G}}}_{\beta,N}^{\otimes2}\left(  \alpha=\alpha^{\prime
}\right)  $ has the same distribution as $\sum\eta_{i}^{2},$ where the
$\eta_{i}$ are the points of a $\operatorname{PD}\left(  m_{\ast}\right)  .$
Here $\alpha,\alpha^{\prime}$ are the first components of the two replicas.
The expectation of $\sum\eta_{i}^{2}$ is well known to be $1-m_{\ast}.$
Therefore, we get%
\begin{equation}
\lim_{N\rightarrow\infty}\mathbb{E}{{\mathcal{G}}}_{\beta,N}^{\otimes2}\left(
\alpha=\alpha^{\prime}\right)  =1-m_{\ast}.\label{Overlap_REMC}%
\end{equation}
Conditioned on $\alpha\neq\alpha^{\prime},$ the overlap of $\sigma
,\sigma^{\prime}$ is $0,$ in the $N\rightarrow\infty$ limit, whereas
conditioned on $\alpha=\alpha^{\prime},$ it is given by%
\begin{equation}
q_{\star}\overset{{\text{def}}}{=}{\frac{E\left[  \tanh^{2}(\beta
g)\operatorname{exp}\left(  m_{\star}\log\cosh(\beta g)\right)  \right]
}{E\left[  \operatorname{exp}(m_{\star}\log\cosh(\beta g))\right]  }%
},\label{qstar}%
\end{equation}
for $g$ a standard Gaussian and $E$ denoting expectation with respect to it.
To phrase it precisely

\begin{proposition}
[Ultrametricity for the REM+Cavity]\label{ultrametricity} For $\beta
>\beta_{\mathrm{cr}}$,%
\begin{equation}
\lim_{N\rightarrow\infty}{\mathbb{E}}\left[  \left\langle \delta
_{\alpha=\alpha^{\prime}}\left(  q(\sigma,\sigma^{\prime})-q_{\star}\right)
^{2}\right\rangle _{\beta,N}^{\otimes2}\right]  =0,\label{overlap1}%
\end{equation}
and%
\begin{equation}
\lim_{N\rightarrow\infty}{\mathbb{E}}\left[  \left\langle \delta_{\alpha
\neq\alpha^{\prime}}q\left(  \sigma,\sigma^{\prime}\right)  ^{2}\right\rangle
_{\beta,N}^{\otimes2}\right]  =0.\label{overlap2}%
\end{equation}

\end{proposition}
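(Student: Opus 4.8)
The plan is to exploit that, conditionally on $\alpha$ and on the disorder, the Gibbs measure is a \emph{product} over the spins $\sigma_i$, and then to feed in the concentration of the marginal law $\mathcal{G}^{(1)}_{\beta,N}$ that is established in the course of proving Proposition~\ref{point_process}. Write $t_{\alpha,i}:=\tanh(\beta g_{\alpha,i})$, $m_2(\alpha):=\tfrac1N\sum_{i=1}^N t_{\alpha,i}^2$ and $r(\alpha,\alpha'):=\tfrac1N\sum_{i=1}^N t_{\alpha,i}t_{\alpha',i}$, and let $\nu_\star$ be the maximiser of \eqref{Varproblem_REM&Cav}, so that $d\nu_\star/d\mu\propto\cosh(\beta x)^{m_\star}$, $\mu$ the standard Gaussian. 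Conditionally on $\alpha$ one has $\langle\sigma_i\rangle=t_{\alpha,i}$ and the spins are independent across $i$; a one-line computation of $\langle q^2\rangle$ then gives, uniformly in the disorder,
\[
\langle (q-q_\star)^2\rangle^{\otimes 2}_{\,\alpha=\alpha'}=(m_2(\alpha)-q_\star)^2+O(1/N),\qquad
r(\alpha,\alpha')^2\le\langle q^2\rangle^{\otimes 2}_{\,\alpha\neq\alpha'}\le r(\alpha,\alpha')^2+1/N .
\]
Since $\mathcal{G}^{(1)}_{\beta,N}(\alpha)^2\le\mathcal{G}^{(1)}_{\beta,N}(\alpha)$ and $\sum_{\alpha\ne\alpha'}\mathcal{G}^{(1)}_{\beta,N}(\alpha)\mathcal{G}^{(1)}_{\beta,N}(\alpha')\le1$, statement \eqref{overlap1} reduces to $\mathbb{E}\big[\sum_\alpha\mathcal{G}^{(1)}_{\beta,N}(\alpha)(m_2(\alpha)-q_\star)^2\big]\to0$ and \eqref{overlap2} to $\mathbb{E}\big[\sum_{\alpha\ne\alpha'}\mathcal{G}^{(1)}_{\beta,N}(\alpha)\mathcal{G}^{(1)}_{\beta,N}(\alpha')\,r(\alpha,\alpha')^2\big]\to0$.

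\emph{The diagonal term.} The large-deviation analysis underlying Proposition~\ref{point_process} (cf.\ the remark after Proposition~\ref{propvariational} that the dominant labels satisfy $L_{N,\alpha}\approx\nu_\star$) yields the Gibbs concentration $\mathbb{E}\big[\sum_\alpha\mathcal{G}^{(1)}_{\beta,N}(\alpha)\mathbf 1\{L_{N,\alpha}\notin U\}\big]\to0$ for every neighbourhood $U$ of $\nu_\star$. As $m_2(\alpha)=\int\tanh^2(\beta x)\,L_{N,\alpha}(dx)$ while $\int\tanh^2(\beta x)\,\nu_\star(dx)=q_\star$ by \eqref{qstar}, continuity of this functional at $\nu_\star$ together with the uniform bound $|m_2(\alpha)-q_\star|\le2$ and an $\varepsilon$-splitting give $\mathbb{E}\big[\sum_\alpha\mathcal{G}^{(1)}_{\beta,N}(\alpha)(m_2(\alpha)-q_\star)^2\big]\to0$, hence \eqref{overlap1}.

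\emph{The off-diagonal term.} Fix $\varepsilon>0$ and let $B_N=B_N(\delta)$ be the set of dominant labels: $X_\alpha/N$ within $\delta$ of the optimal $y_\star$ of \eqref{Varproblem_REM&Cav}, and $L_{N,\alpha}$ within $\delta$ of $\nu_\star$ in a metric that also controls $\int\log\cosh(\beta\cdot)\,dL_{N,\alpha}$. By the Gibbs concentration above, pairs with $\alpha\notin B_N$ or $\alpha'\notin B_N$ contribute at most $2\,\mathbb{E}\big[\sum_\alpha\mathcal{G}^{(1)}_{\beta,N}(\alpha)\mathbf 1_{B_N^c}(\alpha)\big]\to0$ (using $r^2\le1$). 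On the remaining pairs use $r(\alpha,\alpha')^2\le\varepsilon^2+\mathbf 1\{|r(\alpha,\alpha')|>\varepsilon\}$; the $\varepsilon^2$ part costs $\le\varepsilon^2$. For the indicator part, write $W_\alpha=\mathrm e^{\beta X_\alpha}\prod_i\cosh(\beta g_{\alpha,i})$, $Z=\sum_\alpha W_\alpha$, $\tilde Z=\sum_{\alpha\in B_N}W_\alpha\le Z$, and bound $W_\alpha W_{\alpha'}/Z^2\le W_\alpha W_{\alpha'}/\tilde Z^2$. The heart of the matter is a balance of exponential rates: on $B_N$ one has $W_\alpha\le \mathrm e^{N(f(\beta)+\log2)+\omega(\delta)N}$ with $\omega(\delta)\to0$ (because $\beta y_\star+\int\log\cosh(\beta\cdot)\,d\nu_\star=f(\beta)+\log2$ once the entropy constraint $y_\star^2/2+H(\nu_\star\mid\mu)=\log2$ is active, i.e.\ for $\beta>\beta_{\mathrm{cr}}$), while $\tilde Z\ge \mathrm e^{N(f(\beta)+\log2)-\eta N}$ with probability tending to $1$ for any $\eta>0$; and — this is the crucial cancellation — since $\nu_\star$ is a \emph{symmetric} measure and $\tanh$ is odd, $\int\tanh(\beta\cdot)\,d\nu_\star=0$, so for two \emph{independent} labels conditioned to lie in $B_N$ the quantity $r(\alpha,\alpha')$ is a sum of $N$ products of independent near-mean-zero terms, whence a moderate-deviation estimate gives $\mathbb{P}\big(\alpha,\alpha'\in B_N,\ |r(\alpha,\alpha')|>\varepsilon\big)\le \mathrm e^{-2N\log2-c(\varepsilon,\delta)N+o(N)}$ with $\liminf_{\delta\to0}c(\varepsilon,\delta)>0$. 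Choosing $\delta$ and $\eta$ small enough (depending on $\varepsilon$) that $\omega(\delta)+\eta$ is dominated by $c(\varepsilon,\delta)$, and summing over the $\le2^{2N}$ pairs so that $2^{2N}\mathrm e^{-2N\log2}=1$, this expectation tends to $0$. Altogether $\limsup_N\mathbb{E}\big[\langle\delta_{\alpha\ne\alpha'}q^2\rangle^{\otimes2}_{\beta,N}\big]\le\varepsilon^2$, and letting $\varepsilon\downarrow0$ proves \eqref{overlap2}.

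\emph{Main obstacle.} Everything but the off-diagonal estimate is soft (product structure, continuity, and the already-available weak convergence to $\operatorname{PD}(m_\star)$). The difficulty is that in the low-temperature phase the annealed second moment of the untruncated weights exceeds $(\text{typical }Z)^2$ by an exponential factor, so a naive second-moment argument fails; one must restrict to the dominant set $B_N$ furnished by the analysis of Proposition~\ref{point_process}, match exponential rates there, and exploit the oddness cancellation $\int\tanh(\beta\cdot)\,d\nu_\star=0$, which is precisely what renders distinct pure states asymptotically orthogonal in their spin profiles. Keeping the order of limits honest — the width $\delta$ of $B_N$ shrinking as $N\to\infty$, then $\varepsilon\downarrow0$ — is the delicate bookkeeping.
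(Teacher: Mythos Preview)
Your approach is correct but genuinely different from the paper's. The paper proceeds by the cavity method: it pulls out two sites, expresses the relevant expectations as ratios involving the residual weights $w_\alpha^{(1,2)}$, passes to the $\mathrm{PP}(m_\star)$ limit, and evaluates each term via the Ruelle--Talagrand identities of Lemma~\ref{formulas_rem} (specifically \eqref{Tala2} and \eqref{Tala3}); the off-diagonal vanishes because $E[\sinh(\beta g)\cosh(\beta g)^{m_\star-1}]=0$. You instead exploit the product structure directly to reduce everything to the scalar statistics $m_2(\alpha)$ and $r(\alpha,\alpha')$, then handle the diagonal via empirical-measure concentration under $\mathcal G^{(1)}_{\beta,N}$ (this is really \eqref{empirical_measure_chaos} from the proof of Proposition~\ref{chaos_temperature}, not Proposition~\ref{point_process} as you cite) and the off-diagonal by a hands-on rate-matching argument on the dominant set $B_N$. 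Both proofs ultimately rest on the same symmetry $\int\tanh(\beta\cdot)\,d\nu_\star=0$; the paper's route is shorter and modular once the Poisson--Dirichlet limit and Lemma~\ref{formulas_rem} are in hand, while yours is more self-contained and avoids those black boxes entirely, at the price of a more delicate off-diagonal estimate.

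One technical caveat on that estimate: your bound $\mathbb P(\alpha,\alpha'\in B_N,\ |r|>\varepsilon)\le e^{-2N\log 2-c(\varepsilon,\delta)N+o(N)}$ is not a ``moderate-deviation estimate'' in the usual sense, since conditioning on $B_N\times B_N$ destroys the i.i.d.\ structure of the summands $t_{\alpha,i}t_{\alpha',i}$. What actually delivers this is joint Sanov for the pair empirical measure $\tfrac1N\sum_i\delta_{(g_{\alpha,i},g_{\alpha',i})}$ (together with the independent $X$-constraints): decompose $H(\rho\mid\mu\otimes\mu)=H(\rho_1\mid\mu)+H(\rho_2\mid\mu)+H(\rho\mid\rho_1\otimes\rho_2)$; the two marginal terms approach $\log 2$ each as $\delta\to0$ because the entropy constraint is saturated in low temperature, and the mutual-information term $H(\rho\mid\rho_1\otimes\rho_2)$ supplies the extra $c(\varepsilon)>0$ since $|\!\int\tanh(\beta x)\tanh(\beta x')\,d\rho|\ge\varepsilon$ excludes the product coupling $\nu_\star\otimes\nu_\star$. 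With that amendment the argument goes through.
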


It should be remarked that our REM+Cavity model is \textit{not }%
ultrametrically structured for finite $N,$ in contrast to the pure REM or the
GREM. This means that the natural $L_{2}$-metric on the Gaussian Hamiltonians
is \textit{not }an ultrametric on $\Sigma_{N}\times\left\{  -1,1\right\}
^{N}.$

An interesting feature of our model is that it exhibits the so-called
\textquotedblleft chaos in temperature\textquotedblright, in sharp contrast
with the pure REM which does not have this property. The effect is easy to
understand. For a temperature parameter $\beta>\beta_{\mathrm{cr}},$
${{\mathcal{G}}}_{\beta,N}^{\left(  1\right)  }$ picks from the $\alpha$ for
which $X_{\alpha}\approx y_{\beta}^{\ast},\ L_{N,\alpha}\approx\nu_{\beta
}^{\ast},$ $\left(  y_{\beta}^{\ast},\nu_{\beta}^{\ast}\right)  $ being the
maximizer of (\ref{Varproblem_REM&Cav}). $y_{\beta}^{\ast},\nu_{\beta}^{\ast}$
depend in a non-trivial way on $\beta.$ In particular, they change when
$\beta$ is changed, regardless how large $\beta$ is. Therefore the
contribution to the partition function is coming from a completely different
set of $\alpha$'s if one changes the temperature parameter. This is in
contrast to the REM where for $\beta$ above the critical parameter, the
$\alpha$'s which contribute are always those for which the $X_{\alpha}$ are
close to the maximal possible value.

To phrase the property precisely, we have the following result:

\begin{proposition}
[Chaos in temperature for the REM+Cavity]\label{chaos_temperature} Assume
$\beta,\beta^{\prime}>\beta_{\star}$ and $\beta\neq\beta^{\prime}$. There
exists $\delta>0$ such that
\begin{equation}
{\mathbb{P}}\left[  {\mathcal{G}}_{\beta,N}\otimes{\mathcal{G}}_{\beta
^{\prime},N}(\delta_{\alpha=\alpha^{\prime}})\geq\mathrm{e}^{-\delta
N}\right]  \leq\mathrm{e}^{-\delta N},\label{chaos1}%
\end{equation}
and%
\begin{equation}
\lim_{N\rightarrow\infty}{\mathbb{E}}\left\langle \delta_{\alpha\neq
\alpha^{\prime}}q\left(  \sigma,\sigma^{\prime}\right)  ^{2}\right\rangle
_{\beta,\beta^{\prime},N}^{\otimes2}=0.\label{chaos2}%
\end{equation}

\end{proposition}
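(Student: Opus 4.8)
The plan is to reduce both statements to a large-deviations analysis of the pairs $(X_\alpha, L_{N,\alpha})$ at the two temperatures simultaneously, exploiting the fact that for each fixed $\alpha$ the data $X_\alpha$ and $(g_{\alpha,i})_i$ are \emph{shared} between the two Gibbs measures, while across distinct $\alpha$'s everything is independent. Write the product measure ${\mathcal G}_{\beta,N}\otimes{\mathcal G}_{\beta',N}$ evaluated on $\delta_{\alpha=\alpha'}$ as
\[
{\mathcal G}_{\beta,N}\otimes{\mathcal G}_{\beta',N}(\delta_{\alpha=\alpha'})
=\sum_{\alpha\in\Sigma_N}{\mathcal G}^{(1)}_{\beta,N}(\alpha)\,{\mathcal G}^{(1)}_{\beta',N}(\alpha)
=\frac{2^{-2N}\sum_\alpha W_\alpha(\beta)W_\alpha(\beta')}{Z_N(\beta)Z_N(\beta')},
\]
where $W_\alpha(\beta)\overset{\mathrm{def}}{=}\exp[\beta X_\alpha + \sum_i\log\cosh(\beta g_{\alpha,i})]$ is the per-$\alpha$ weight already isolated in the computation following Proposition \ref{propvariational}. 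Since $2^{-N}Z_N(\beta)\to \mathrm e^{f(\beta)}$ and likewise for $\beta'$ (Proposition \ref{propvariational}, together with concentration of $f_N$ around its mean), proving \eqref{chaos1} amounts to showing that the numerator's first-moment-type quantity $2^{-2N}\mathbb E\sum_\alpha W_\alpha(\beta)W_\alpha(\beta')$ is exponentially smaller than $\mathrm e^{N(f(\beta)+f(\beta'))}$, and then upgrading this expectation bound to an almost-sure statement via a union bound over the $2^N$ values of $\alpha$ combined with the exponential tail bounds on $W_\alpha$.

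The heart of the matter is the first-moment estimate. Write $W_\alpha(\beta)W_\alpha(\beta')=\exp[(\beta+\beta')X_\alpha + \sum_i(\log\cosh\beta g_{\alpha,i}+\log\cosh\beta' g_{\alpha,i})]$ and apply Varadhan/Laplace to $X_\alpha\approx yN$ and $L_{N,\alpha}\approx\nu$, using that $X_\alpha/N$ and $L_{N,\alpha}$ satisfy a large deviation principle with rate $y^2/2 + H(\nu\mid\mu)$ ($\mu$ standard Gaussian). This yields
\[
\tfrac1N\log\!\Big(\mathbb E\,W_\alpha(\beta)W_\alpha(\beta')\Big)\;\longrightarrow\;
\sup_{y,\nu}\Big\{(\beta+\beta')y+\!\int\!(\log\cosh\beta g+\log\cosh\beta' g)\,\nu(dg)-\tfrac{y^2}{2}-H(\nu\mid\mu)\Big\},
\]
which after the usual Gaussian integral in $y$ and the variational computation in $\nu$ (maximizer an exponentially tilted Gaussian) becomes an explicit number, call it $\Lambda(\beta,\beta')$. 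The key inequality to establish is the strict bound
\[
\Lambda(\beta,\beta') \;<\; f(\beta)+f(\beta') \qquad\text{whenever }\beta\neq\beta',\ \beta,\beta'>\beta_{\mathrm{cr}},
\]
together with the analogous statement incorporating the entropy constraint $y^2/2+H(\nu\mid\mu)\le\log 2$ coming from there being only $2^N$ terms — indeed when $\beta,\beta'>\beta_{\mathrm{cr}}$ the constraint is active and this is precisely where the gap is created. The strictness should follow from strict concavity: the function $\nu\mapsto \int\log\cosh(\beta g)\,\nu(dg) - H(\nu\mid\mu)$ is strictly concave, so the constrained maximizers $\nu^*_\beta$ and $\nu^*_{\beta'}$ for the two single-temperature problems \eqref{Varproblem_REM&Cav} are distinct (they depend nontrivially on temperature, as noted in the chaos discussion), hence the joint supremum defining $\Lambda$ is achieved at some $\nu$ that is simultaneously suboptimal for both single-temperature problems, forcing a strict loss. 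Setting $\delta$ to be, say, half of $f(\beta)+f(\beta')-\Lambda(\beta,\beta')$ together with the concentration exponents then gives \eqref{chaos1}.

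For \eqref{chaos2}, expand
\[
\big\langle \delta_{\alpha\neq\alpha'}q(\sigma,\sigma')^2\big\rangle^{\otimes2}_{\beta,\beta',N}
=\sum_{\alpha\neq\alpha'}{\mathcal G}^{(1)}_{\beta,N}(\alpha){\mathcal G}^{(1)}_{\beta',N}(\alpha')\,
\big\langle q(\sigma,\sigma')^2\big\rangle_{\beta,N,\alpha}\otimes_{\,\beta',N,\alpha'},
\]
where for $\alpha\neq\alpha'$ the inner replicated measure factorizes over independent Gaussian fields $(g_{\alpha,\cdot})$ and $(g_{\alpha',\cdot})$; conditionally on these, $q(\sigma,\sigma')^2=N^{-2}\sum_{i,j}\sigma_i\sigma'_i\sigma_j\sigma'_j$ has conditional expectation $N^{-2}\sum_i \tanh(\beta g_{\alpha,i})\tanh(\beta' g_{\alpha',i})\cdot\tanh(\beta g_{\alpha,i})\tanh(\beta'g_{\alpha',i})$ plus a diagonal $O(1/N)$ term — more cleanly, $\mathbb E[q^2\mid g] = (\tfrac1N\sum_i a_i b_i)^2 + \tfrac1N(\text{bounded})$ with $a_i=\tanh\beta g_{\alpha,i}$, $b_i=\tanh\beta' g_{\alpha',i}$ independent across $i$ and across $\alpha,\alpha'$, each centered. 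Hence the conditional mean of $q^2$ is $O(1/N)$ with high probability uniformly, and taking $\mathbb E\langle\cdot\rangle$ — using that the Gibbs weights sum to at most one — gives $\lim_N=0$. This second part is essentially the same decorrelation argument that underlies \eqref{overlap2} in Proposition \ref{ultrametricity} and should be routine. The main obstacle is therefore the strict variational inequality $\Lambda(\beta,\beta')<f(\beta)+f(\beta')$: one must verify that the temperature-dependence of the maximizer $\nu^*_\beta$ is genuine and quantify the resulting loss, which requires a careful strict-concavity argument on the constrained optimization rather than the soft compactness arguments that suffice for existence of limits.
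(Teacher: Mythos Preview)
Your approach to \eqref{chaos1} via a first-moment bound on the numerator does not work as stated. The expectation $\mathbb{E}[W_\alpha(\beta)W_\alpha(\beta')]$ is dominated by atypical values of $(X_\alpha,L_{N,\alpha})$ that do not actually occur among the $2^N$ indices; in low temperature this unconstrained rate $\Lambda(\beta,\beta')$ is in general \emph{larger} than $f(\beta)+f(\beta')$, not smaller (already in the pure REM one gets equality, and adding the cavity does not reverse the sign). You seem to recognise this when you speak of ``incorporating the entropy constraint $y^2/2+H(\nu\mid\mu)\le\log 2$'', but once you impose that constraint you are no longer computing a first moment---you are computing the almost-sure exponential rate of $2^{-N}\sum_\alpha W_\alpha(\beta)W_\alpha(\beta')$ via Theorem~\ref{main_theorem_abstract}. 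That constrained route does work, and your strict-inequality argument (distinct maximizers $\nu_\beta^*\neq\nu_{\beta'}^*$ force any common $\nu$ to be suboptimal for at least one temperature) is correct. The paper takes the equivalent but cleaner path of localizing each Gibbs measure separately: it shows that ${\mathcal G}_{\beta,N}$ puts all but exponentially little mass on $\{\alpha:L_{N,\alpha}\in B_\varepsilon(G_{m_\star(\beta)})\}$, and likewise for $\beta'$; since $G_{m_\star(\beta)}\neq G_{m_\star(\beta')}$ one can choose the balls disjoint, whence the diagonal $\{\alpha=\alpha'\}$ has exponentially small product mass.

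Your argument for \eqref{chaos2} has a genuine gap. The identity $\langle q^2\mid\alpha,\alpha'\rangle=(\tfrac1N\sum_i a_ib_i)^2+O(1/N)$ with $a_i=\tanh\beta g_{\alpha,i}$, $b_i=\tanh\beta' g_{\alpha',i}$ is correct, but the claim that $(\tfrac1N\sum_i a_ib_i)^2=O(1/N)$ ``with high probability uniformly'' over all $\alpha\neq\alpha'$ is false: a union bound over the $2^{2N}$ pairs only controls $|\tfrac1N\sum a_ib_i|$ down to order $2\sqrt{\log 2}$, not to $o(1)$. And you cannot simply average the per-pair bound $\mathbb{E}[(\tfrac1N\sum a_ib_i)^2]=O(1/N)$ against the Gibbs weights, because ${\mathcal G}^{(1)}_{\beta,N}(\alpha)$ and ${\mathcal G}^{(1)}_{\beta',N}(\alpha')$ depend on the same $g_{\alpha,\cdot}$, $g_{\alpha',\cdot}$ through their normalizations. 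The paper handles this by a cavity decoupling: it removes sites $1,2$ from the weights, reduces (via symmetry) to $\mathbb{E}\langle\sigma_1\sigma_1'\sigma_2\sigma_2'\rangle$, and then uses that the limiting point processes $(w_\alpha^{(1,2,\beta)})$ and $(w_{\alpha'}^{(1,2,\beta')})$ are asymptotically \emph{independent} (a consequence of part~(a)), so the expectation factorizes into two terms each of which vanishes by formula~\eqref{Tala1}. Some such decoupling is unavoidable here.
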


Summarizing, we have the following situation for the $N\rightarrow\infty$ the
Gibbs measure at $\beta>\beta_{\mathrm{cr}}$: It gives macroscopic weights to
$\alpha$'s for which $L_{N,\alpha}$ is approximately $\nu_{\beta}^{\ast}.$ The
random weights are given by a Poisson-Dirichlet point process with parameter
$m_{\ast}.$ If in the replicated system, $\alpha\neq\alpha^{\prime},$ then the
corresponding $\sigma,\sigma^{\prime}$ have zero overlap with probability
$\approx1.$ On the other hand, if $\alpha=\alpha^{\prime},$ then also the
$\sigma,\sigma^{\prime}$ have a non-zero overlap, given by $q_{\ast}.$ If
$\beta$ changes, then the choice is made from a completely different group of
$\alpha$'s.

\section{Proofs of the main results\label{proofs}}

\subsection{The free energy for spin glasses of
REM-type\label{SubSect_FreeEnergy}}

For the proof of Theorem \ref{main_theorem_abstract}, a technical result is
needed. Given $A\subset{{\mathcal{M}}}_{1}^{+}(S)$, we set
\[
M_{N}(A)\overset{\mathrm{def}}{=}\#\left\{  \alpha\leq2^{N}:L_{N,\alpha}\in
A\right\}  .
\]
We also write $H(A)$ for $\operatorname{inf}_{\nu\in A}H(\nu\mid\mu)$.

\begin{lemma}
\label{abstract_ldp} Let $\nu\in{\mathcal{M}}_{1}^{+}(S)$, and $V$ be an open
neighborhood of $\nu$. If $H(\nu\mid\mu)\leq\log2$, and $\varepsilon>0$, then
there exists an open neighborhood $U$ of $\nu$, $U\subset V$, and $\delta>0$
such that for large enough $N$
\begin{equation}
{\mathbb{P}}\Big [M_{N}(U)\leq\operatorname{exp}\left[  N(\log2-H(\nu\mid
\mu)-\varepsilon)\right]  \Big ]\leq\mathrm{e}^{-N\delta},\label{ldp_upper}%
\end{equation}%
\begin{equation}
{\mathbb{P}}\Big [M_{N}(U)\geq\operatorname{exp}\left[  N(\log2-H(\nu\mid
\mu)+\varepsilon)\right]  \Big ]\leq\mathrm{e}^{-N\delta}.\label{ldp_lower}%
\end{equation}
If $H(\nu)>\log2$, then there exist $U$ and $\delta$ as above, with
\[
{\mathbb{P}}\Big [M_{N}(U)\neq0\Big ]\leq\mathrm{e}^{-N\delta}.
\]

\end{lemma}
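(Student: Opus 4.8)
The statement is essentially a local large deviation estimate for the number of empirical measures $L_{N,\alpha}$ falling into a small neighborhood $U$ of a fixed $\nu$, combined with a concentration estimate around the mean. The plan is to split the argument into two parts: (i) a Sanov-type estimate identifying $\mathbb{E} M_N(U)$ up to exponential order, and (ii) a second-moment / Chebyshev argument upgrading this to the high-probability statements \eqref{ldp_upper}--\eqref{ldp_lower}. I would begin by recalling Sanov's theorem on the Polish space $S$: for a single index $\alpha$, $\mathbb{P}[L_{N,\alpha} \in U]$ decays like $\exp[-N H(U)]$ on the exponential scale, where $H(U) = \inf_{\rho \in U} H(\rho \mid \mu)$; more precisely, by the large deviation lower bound, for any $\varepsilon_0>0$ there is an open $U \subset V$ with $\mathbb{P}[L_{N,\alpha}\in U] \geq \exp[-N(H(\nu\mid\mu)+\varepsilon_0)]$ for large $N$, and by the upper bound one can also arrange $\mathbb{P}[L_{N,\alpha}\in U]\le \exp[-N(H(\nu\mid\mu)-\varepsilon_0)]$ by shrinking $U$ (using lower semicontinuity of $H(\cdot\mid\mu)$, so that $H(U)\to H(\nu\mid\mu)$ as $U\downarrow\{\nu\}$). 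Choosing $\varepsilon_0 = \varepsilon/3$, say, this gives
\[
\exp\!\big[N(\log 2 - H(\nu\mid\mu) - \varepsilon/3)\big] \;\leq\; \mathbb{E} M_N(U) \;\leq\; \exp\!\big[N(\log 2 - H(\nu\mid\mu) + \varepsilon/3)\big]
\]
since $M_N(U)$ is a sum of $2^N$ i.i.d. indicator variables.

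For the upper tail \eqref{ldp_lower}, I would simply apply Markov's inequality: $\mathbb{P}[M_N(U) \geq e^{N(\log2 - H(\nu\mid\mu)+\varepsilon)}] \leq \mathbb{E} M_N(U)\, e^{-N(\log2 - H(\nu\mid\mu)+\varepsilon)} \leq e^{-N\varepsilon/3 \cdot 2}$ roughly, giving exponential decay with $\delta = \varepsilon/3$ (or some such). The case $H(\nu\mid\mu) > \log 2$ is handled the same way: then $\mathbb{E} M_N(U) \leq e^{N(\log2 - H(\nu\mid\mu)+\varepsilon/3)} = e^{-N(H(\nu\mid\mu)-\log2-\varepsilon/3)}$, and choosing $U$ small enough that $H(U) > \log 2$ (possible by lower semicontinuity, since $H(\nu\mid\mu)>\log 2$ is an open condition in the weak topology once we intersect with a suitable set — more carefully, pick $\varepsilon$ small so that $H(\nu\mid\mu) - \varepsilon > \log 2$), Markov's inequality on $\mathbb{P}[M_N(U)\ge 1]\le \mathbb{E} M_N(U)$ gives the desired bound.

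The genuinely non-trivial direction is the lower tail \eqref{ldp_upper}: one must rule out $M_N(U)$ being atypically \emph{small}. Here I would use the second moment method (Paley–Zygmund). Since $M_N(U) = \sum_{\alpha=1}^{2^N} \mathbf{1}_{L_{N,\alpha}\in U}$ with the summands i.i.d., $\mathrm{Var}(M_N(U)) \leq \mathbb{E} M_N(U)$, so $\mathbb{E} M_N(U) \to \infty$ exponentially fast (this uses $H(\nu\mid\mu) < \log 2$ crucially — this is exactly where the hypothesis enters). Then for $t = e^{N(\log2 - H(\nu\mid\mu) - \varepsilon)} \leq \tfrac12 \mathbb{E} M_N(U)$ (valid for large $N$ by the lower bound on $\mathbb{E} M_N(U)$ with $\varepsilon_0 = \varepsilon/3 < \varepsilon$), Chebyshev gives $\mathbb{P}[M_N(U) \leq t] \leq \mathbb{P}[|M_N(U) - \mathbb{E} M_N(U)| \geq \tfrac12 \mathbb{E} M_N(U)] \leq 4\,\mathrm{Var}(M_N(U))/(\mathbb{E} M_N(U))^2 \leq 4/\mathbb{E} M_N(U) \leq 4\exp[-N(\log2 - H(\nu\mid\mu) - \varepsilon/3)]$, which decays exponentially since $\log 2 - H(\nu\mid\mu) > 0$; this fixes $\delta$ (any positive number below $\log2 - H(\nu\mid\mu)$, and below $\varepsilon/3$, works after relabeling). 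The main obstacle — really the only subtlety — is the careful interplay of the three small parameters: one needs a \emph{single} open set $U\subset V$ that simultaneously delivers the Sanov lower bound with exponent margin $<\varepsilon$ and a Sanov-type upper bound with margin $<\varepsilon$; this is arranged by first shrinking $U$ for the upper bound (lower semicontinuity of relative entropy guarantees $\inf_{\rho\in U}H(\rho\mid\mu)\uparrow H(\nu\mid\mu)$), then checking the lower bound holds for that same $U$ (it holds for every open neighborhood by the LDP lower bound, so no conflict arises). Once $U$ is fixed, everything else is Markov and Chebyshev.
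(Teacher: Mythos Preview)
Your approach---Sanov to pin down $\mathbb{E}M_N(U)$ on the exponential scale, then Markov/Chebyshev for the two tails---is essentially the same as the paper's, and the argument is correct \emph{when $H(\nu\mid\mu)<\log 2$ strictly}. You flag this yourself (``this uses $H(\nu\mid\mu)<\log 2$ crucially''; ``since $\log 2 - H(\nu\mid\mu)>0$''), but the lemma is stated for $H(\nu\mid\mu)\le\log 2$, and at the boundary your lower-tail argument collapses: your Sanov lower bound only gives $\mathbb{E}M_N(U)\ge e^{N(\log 2 - H(\nu\mid\mu)-\varepsilon_0)}=e^{-N\varepsilon_0}$, so Chebyshev yields nothing.

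The fix is exactly what the paper does and what you have the ingredients for but do not exploit: for $\nu\neq\mu$ the open neighborhood $U$ satisfies $H(U)=\inf_{\rho\in U}H(\rho\mid\mu)<H(\nu\mid\mu)$ \emph{strictly} (take convex combinations $(1-t)\nu+t\mu$, which lie in $U$ for small $t$ and have strictly smaller entropy by convexity). The paper makes this quantitative by choosing $U=B_r(\nu)$ with $H(U)=H(\operatorname{cl}U)=H(\nu\mid\mu)-\varepsilon_1$ for a small but fixed $\varepsilon_1>0$; Sanov then gives $\mathbb{P}[L_{N,\alpha}\in U]\ge e^{-N(H(\nu\mid\mu)-\tfrac{5}{6}\varepsilon_1)}$, so $\mathbb{E}M_N(U)\ge e^{N\cdot\tfrac{5}{6}\varepsilon_1}$ is exponentially large even when $H(\nu\mid\mu)=\log 2$, and Chebyshev goes through with $\delta=\varepsilon_1/4$. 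Once you sharpen your lower bound on $p_N$ from $e^{-N(H(\nu\mid\mu)+\varepsilon_0)}$ to $e^{-N(H(U)+\varepsilon_0)}$ and record that $H(U)<H(\nu\mid\mu)$, your argument is complete and matches the paper. (A cosmetic difference: the paper uses Chebyshev for both tails, you use Markov for the upper tail; both are fine.)
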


\begin{proof}
Let first $\nu\in{\mathcal{M}}_{1}^{+}(S)$ satisfy $H(\nu\mid\mu)\leq\log2$.
The statement of the Lemma is trivial if $\nu=\mu$, so we assume $\nu\neq\mu$.
Let $B_{r}(\nu)\subset{\mathcal{M}}_{1}^{+}(S)$ be the open ball of radius $r$
and center $\nu$, where we have equipped ${\mathcal{M}}_{1}^{+}(S)$ with one
of the standard metrics, e.g. Prohorov's metric. Then $H(B_{r}(\nu
))=H(\operatorname*{cl}(B_{r}(\nu)))$, except for countably many $r$.
Therefore we can find arbitrary small $\varepsilon_{1}>0$, and $U\overset
{\mathrm{def}}{=}B_{r}(\nu)\subset V$, such that $H(U)=H(\operatorname*{cl}%
(U))=H(\nu\mid\mu)-\varepsilon_{1}$. From Sanov's Theorem, we have
\[
{\mathbb{P}}\left(  L_{N,\alpha}\in U\right)  \geq\operatorname{exp}\left[
-N\left(  H(\nu\mid\mu)-{\frac{5}{6}}\varepsilon_{1}\right)  \right]  ,
\]%
\[
{\mathbb{P}}\left(  L_{N,\alpha}\in\operatorname*{cl}(U)\right)
\leq\operatorname{exp}\left[  -N\left(  H(\nu\mid\mu)-{\frac{7}{6}}%
\varepsilon_{1}\right)  \right]
\]
for large enough $N$. Therefore%
\[
{\mathbb{E}}M_{N}(U)\geq\operatorname{exp}\left[  N\left(  \log2-H(\nu\mid
\mu)+{\frac{5}{6}}\varepsilon_{1}\right)  \right]  ,
\]%
\[
{\mathbb{E}}M_{N}(U)\leq{\mathbb{E}}M_{N}(\operatorname*{cl}U)\leq
\operatorname{exp}\left[  N\left(  \log2-H(\nu\mid\mu)+{\frac{7}{6}%
}\varepsilon_{1}\right)  \right]  ,
\]
and using the independence of the $L_{N,\alpha}$%
\[
{\mathbb{E}}M_{N}(U)^{2}\leq\left(  {\mathbb{E}}M_{N}(U)\right)
^{2}+\operatorname{exp}\left[  N\left(  \log2-H(\nu\mid\mu)+{\frac{7}{6}%
}\varepsilon_{1}\right)  \right]  ,
\]%
\begin{equation} \label{variance_estimate}
\operatorname*{var}\nolimits_{{\mathbb{P}}}M_{N}(U)\leq\mathrm{e}%
^{-N\varepsilon_{1}/2}\left(  {\mathbb{E}}M_{N}(U)\right)  ^{2}%
\end{equation}
Hence,%
\[
{\mathbb{P}}\Big [M_{N}(U)\leq\left(  1-\mathrm{e}^{-N\varepsilon_{1}%
/8}\right)  {\mathbb{E}}M_{N}(U)\Big ]\leq\operatorname{exp}\left[
-N\varepsilon_{1}/4\right]  ,
\]%
\[
{\mathbb{P}}\Big [M_{N}(U)\geq\left(  1+\mathrm{e}^{-N\varepsilon_{1}%
/8}\right)  {\mathbb{E}}M_{N}(U)\Big ]\leq\operatorname{exp}\left[
-N\varepsilon_{1}/4\right]  .
\]
Choosing $\varepsilon_{1}$ smaller than $\varepsilon/2$ and $\delta
=\varepsilon_{1}/4$ proves the Lemma in this case. The case $H(\nu\mid
\mu)>\log2$ needs only a slight modification. In that case, there exists an
open neighborhood $U\subset V$ of $\nu$ such that ${\mathbb{P}}\left(
L_{N,\alpha}\in U\right)  $ is exponentially small in $N$, with a decay rate
which is bigger than $\log2$. This proves the claim by the Markov inequality.
\end{proof}

\begin{proof}
[Proof of Theorem \ref{main_theorem_abstract}]We first prove the lower bound.
Let $\nu$ be any element in ${\mathcal{M}}_{1}^{+}(S)$ satisfying $H(\nu
\mid\mu)\leq\log2$. Let $\varepsilon>0$. As $\Phi$ is continuous, we can
choose an open neighborhood $V$ of $\nu$ satisfying $\mid\Phi(\gamma)-\Phi
(\nu)\mid\leq\varepsilon$ for $\gamma\in V$. Applying Lemma \ref{abstract_ldp}
we find a neighborhood $U$ of $\nu$ in $V$ satisfying \eqref {ldp_upper}. As
\[
Z_{N}\geq2^{-N}\operatorname{exp}\left[  N\operatorname{inf}_{\gamma\in U}%
\Phi(\gamma)\right]  M_{N}(U)\geq2^{-N}\mathrm{e}^{N\left(  \Phi
(\gamma)-\varepsilon\right)  }M_{N}(U),
\]
we get from Lemma \ref{abstract_ldp} that ${\mathbb{P}}$-a.s. one has
eventually
\begin{align*}
Z_{N}  & \geq2^{-N}\operatorname{exp}\left[  N\left(  \Phi(\gamma
)-\varepsilon\right)  \right]  \operatorname{exp}\left[  N\left(  \log
2-H(\nu\mid\mu)-\varepsilon\right)  \right] \\
& \geq\operatorname{exp}\left[  N\left\{  \Phi(\nu)-H(\nu\mid\mu
)-2\varepsilon\right\}  \right]  ,
\end{align*}
and therefore
\[
\liminf_{N\rightarrow\infty}{\frac{1}{N}}\log Z_{N}\geq\Phi(\nu)-H(\nu\mid\mu)
\]
almost surely, for all $\nu$ with $H(\nu\mid\mu)\leq\log2$. This proves the
lower bound.

We now prove the upper bound. We use the well-known fact that there exists a
compact set $K\subset{{\mathcal{M}}}_{1}^{+}(S)$ such that ${\mathbb{P}%
}\left(  L_{N}\notin K\right)  \leq\operatorname{exp}\left[  -N(\log
2+1)\right]  $. Let $D_{N}$ be the event
\[
D_{N}\overset{{\text{def}}}{=}\bigcap_{\alpha=1}^{2^{N}}\left\{  L_{N,\alpha
}\in K\right\}  .
\]
Then
\[
{\mathbb{P}}\left[  D_{N}^{c}\right]  \leq2^{N}2\mathrm{e}^{-N\left(
\log2+1\right)  },
\]
and therefore
\[
{\mathbb{P}}\left[  \liminf_{N\rightarrow\infty}D_{N}\right]  =1.
\]
Fix $\varepsilon>0$. For any $\nu\in K$, we choose $V_{\nu}$ such that
$\Phi(\gamma)-\Phi(\nu)\leq\varepsilon$ for $\gamma\in V_{\nu}$, and then
$U_{\nu}\subset V_{\nu}$ according to Lemma \ref{abstract_ldp}. The $U_{\nu}$
cover $K$, and we choose a finite subcover, call it $U_{\nu_{1}},\dots
,U_{\nu_{m}}$. Then, on $D\overset{{\text{def}}}{=}\liminf_{N}D_{N}$ we have,
writing $U_{k}$ instead of $U_{\nu_{k}}$,
\begin{align*}
Z_{N}  & =2^{-N}\sum_{\alpha}\operatorname{exp}\left[  N\Phi(L_{N,\alpha
})\right] \\
& =2^{-N}\sum_{k=1}^{m}\sum_{\alpha:L_{N,\alpha}\in U_{k}}\operatorname{exp}%
\left[  N\Phi(L_{N,\alpha})\right] \\
& \leq2^{-N}\sum_{k=1}^{m}\operatorname{exp}\left[  N\left\{  \Phi(\nu
_{k})+\varepsilon\right\}  \right]  M_{N}(U)\\
& \leq\sum_{k:H(\nu_{k}\mid\mu)\leq\log2}\operatorname{exp}\left[  N\left\{
\Phi(\nu_{k})-H(\nu_{k}\mid\mu)+2\varepsilon\right\}  \right]
\end{align*}
outside an event which has probability at most $m\operatorname{exp}\left[
-N\min_{j\leq m}\delta_{j}\right]  $, where the $\delta_{j}$ corresponds to
the $U_{j}$. From this estimate on gets that ${\mathbb{P}}$-a.s. one has
\[
\limsup_{N\rightarrow\infty}{\frac{1}{N}}\log Z_{N}\leq\sup_{\nu:H(\nu\mid
\mu)\leq\log2}\left[  \Phi(\nu)-H(\nu\mid\mu)\right]  ,
\]
which together with the lower bound settles the proof of Theorem
\ref{main_theorem_abstract}.
\end{proof}

\begin{proof}
[Proof of Theorem \ref{solving_gvp}]To prove (\ref{VarFormula_LinearUnbounded}%
), we cannot directly apply Theorem \ref{main_theorem_abstract} unless $\phi$
is bounded. We therefore truncate $\phi$ by defining $\phi_{M}\left(
x\right)  \overset{\mathrm{def}}{=}\min\left(  M,\max\left(  \phi\left(
x\right)  ,-M\right)  \right)  ,$ $M>0, $ which is bounded and continuous. As
a consequence of our assumption (\ref{Cond_ExponentialMoment}), we have that
for any $\varepsilon>0,$ and $K>0,$ we can find $M$ such that%
\[
\mathbb{P}\left(  \left\vert \sum\nolimits_{i}\left(  \phi\left(  X_{\alpha
,i}\right)  -\phi_{M}\left(  X_{\alpha,i}\right)  \right)  \right\vert
\geq\varepsilon N\right)  \leq\exp\left[  -KN\right]  .
\]
If we choose $K>\log2,$ then with probability going to $0$ exponentially fast
in $N,$ there is no $\alpha$ such that $\left\vert \sum\nolimits_{i}\left(
\phi\left(  X_{\alpha,i}\right)  -\phi_{M}\left(  X_{\alpha,i}\right)
\right)  \right\vert \geq\varepsilon N.$ In particular%
\begin{align*}
\sum_{\alpha}\exp\left[  -\sum\nolimits_{i}\phi_{M}\left(  X_{\alpha
,i}\right)  -\varepsilon N\right]   & \leq\sum_{\alpha}\exp\left[
-\sum\nolimits_{i}\phi\left(  X_{\alpha,i}\right)  \right] \\
& \leq\sum_{\alpha}\exp\left[  -\sum\nolimits_{i}\phi_{M}\left(  X_{\alpha
,i}\right)  +\varepsilon N\right]  .
\end{align*}
Applying Theorem \ref{main_theorem_abstract} to $\phi_{M},$ and passing to the
$M\rightarrow\infty$ limit in the end, proves
(\ref{VarFormula_LinearUnbounded}).

The more complicated claim is the one on the characterization of the maximizer.

We first restrict the analysis of the GVP (\ref{gvp}) to measures $G_{m}$ of
the form (\ref{Def_Gm}). Restricting to the variational formula to these
measures evidently yields a lower bound to the GVP, which actually reads
\begin{equation}
\sup_{m\in{\mathbb{R}}}\Big\{(1-m)\Gamma_{\phi}^{\prime}(m)+\Gamma_{\phi
}(m):m\Gamma_{\phi}^{\prime}(m)-\Gamma_{\phi}(m)\leq\log2\Big\}.\label{gibbs}%
\end{equation}

We now claim that the target function $(1-m)\Gamma_{\phi}^{\prime}%
(m)+\Gamma_{\phi}(m)$ is increasing on $m\in(-\infty,1]$ and decreasing
otherwise; in fact
\[
{\frac{d}{dm}}\left[  (1-m)\Gamma_{\phi}^{\prime}(m)+\Gamma_{\phi}(m)\right]
=(1-m)\Gamma_{\phi}^{\prime\prime}(m)
\]
and $\Gamma_{\phi}^{\prime\prime}(m)>0$, $\forall m\in{\mathbb{R}}$. Thus, we
can restrict the search for a maximizing $m\in{\mathbb{R}}$ in (\ref{gibbs})
to:
\begin{equation}
\sup_{m\in(-\infty,1]}\Big\{(1-m)\Gamma_{\phi}^{\prime}(m)+\Gamma_{\phi
}(m):m\Gamma_{\phi}^{\prime}(m)-\Gamma_{\phi}(m)\leq\log
2\Big\}\label{gibbstre}%
\end{equation}
But monotonicity also implies that the (global) maximum is attained in $m=1$,
i.e. equals $\Gamma_{\phi}\left(  1\right)  $ as long as the side condition
$\Gamma_{\phi}^{\prime}(1)-\Gamma_{\phi}(1)\leq\log2$, i.e. we are in the high
temperature region.

In the low temperature region, i.e. if $\Gamma_{\phi}^{\prime}(1)-\Gamma
_{\phi}(1)>\log2$, we first observe that the function $m\rightarrow
m\Gamma_{\phi}^{\prime}(m)-\Gamma_{\phi}(m)$ is also increasing, this time for
any value of $m\geq0$:
\[
{\frac{d}{dm}}\Big (m\Gamma_{\phi}^{\prime}(m)-\Gamma_{\phi}(m)\Big )=m\Gamma
_{\phi}^{\prime\prime}(m).
\]
Hence, monotonicity of both target and constraint function yields that the
maximum is achieved at the largest possible value, which is the one
satisfying:
\begin{equation}
m\Gamma_{\phi}^{\prime}(m)-\Gamma_{\phi}(m)=\log2.\label{equation_m}%
\end{equation}
As the left-hand side is $0<\log2$ at $m=0,$ and $>\log2$ at $m=1,$ continuity
and strict monotonicity implies that there is a unique $m_{\star}\in(0,1)$
satisfying this equation.

It remains to prove that any maximizer of (\ref{gvp}) is $G_{m_{\ast}}.$ For
an arbitrary probability measure $\nu$ on $S,$ we set%
\[
\psi\left(  \nu\right)  \overset{\mathrm{def}}{=}E_{\nu}\left(  \phi\right)
-H(\nu\mid\mu).
\]
We compute the entropy of $\nu$ relative to $G_{m}$:
\begin{align*}
H\left(  \nu\mid G_{m}\right)   & =E_{\nu}\log{\frac{d\nu}{dG_{m}}}={E}_{\nu
}\log\left(  {\frac{d\nu}{d\mu}}\cdot{\frac{d\mu}{dG_{m}}}\right) \\
& =H\left(  \nu\mid\mu\right)  -E_{\nu}\log{\frac{dG_{m}}{d\mu}}\\
& =H(\nu\mid\mu)-mE_{\nu}\phi+\log Z(m)\\
& =H(\nu\mid\mu)-mE_{\nu}\phi+mE_{G_{m}}\phi-H(G_{m}\mid\mu)
\end{align*}
the last equality stemming from the definition of the $G_{m}$, according to
which
\[
H(G_{m}\mid\mu)=mE_{G_{m}}[\phi]-\log Z(m)
\]
An elementary computation yields%
\[
m\left(  \psi\left(  G_{m}\right)  -\psi\left(  \nu\right)  \right)  =H\left(
\nu\mid G_{m}\right)  +\left(  1-m\right)  \left[  H(G_{m}\mid\mu)-H(\nu
\mid\mu)\right]  .
\]
This is true for any $m,$ but we apply in now to $m=m_{\ast}.$ Then either
$1-m_{\ast}=0,$ or $H(G_{m_{\ast}}\mid\mu)=\log2$. Therefore, if $H(\nu\mid
\mu)\leq\log2,$ then the right hand side above is non-negative, implying that
$\psi\left(  \nu\right)  \leq\psi\left(  G_{m_{\ast}}\right)  ,$ and equality
only if $H\left(  \nu\mid G_{m_{\ast}}\right)  =0,$ i.e. $\nu=G_{m_{\ast}}.$
This proves the claim.
\end{proof}

\begin{proof}
[Proof of Theorem \ref{equiv_PVP_GVP}]To prove the equivalence of Parisi and
Gibbs Variational Principles, we consider the function $\Psi(m)\overset
{\mathrm{def}}{=}{\frac{\log2}{m}}+{\frac{1}{m}}\log E_{\mu}\left[
\mathrm{e}^{m\phi}\right]  -\log2$. Recall that Parisi variational principle
amounts to minimize $\Psi(m)$ over $[0,1]$. Clearly, either is the minimum
attained on the boundary value $m=1$, or in $m$ solution of $\Psi^{\prime
}(m)=0$. In case the optimal value is attained in $m=1$, one immediately sees
that $\operatorname{inf}_{m\in\lbrack0,1]}\Psi(m)=\Psi(1)=\Gamma_{\phi}(1)$,
thus exactly as in the high-temperature case of Theorem \ref{solving_gvp}.
Otherwise, it is crucial to remark that
\[
\Psi^{\prime}(m)={\frac{1}{m^{2}}}\left\{  H(G_{m}\mid\mu)-\log2\right\}  .
\]
Therefore, $\Psi^{\prime}(m)=0$ iff $H(G_{m}\mid\mu)=\log2$. By Theorem
\ref{solving_gvp}, we already know that the generalized Gibbs measure
associated to the solution of the latter equation is optimal. It is also a
simple algebraic exercise to check that with $m_{\star}$ such that
$\Psi^{\prime}(m_{\star})=0$ one also has $\Psi(m_{\star})=\Gamma_{\phi
}^{\prime}(m_{\star})-\log2$, showing the equivalence of Parisi and Gibbs
variational principle in the low temperature case as well.
\end{proof}

\begin{remark}
\label{interpretation} The above considerations also show that the order
parameter of the Parisi formulation, the optimal $m_{\star}$ (with either
$m_{\star}=1$ or such that $\Psi(m_{\star})=0$), is in fact the inverse of
temperature of the generalized Gibbs measure solving the Gibbs variational
principle. Moreover, derivatives of the target function $\Psi$ in the Parisi
formulation naturally appear in terms of entropies of the generalized Gibbs
measures relative to the underlying random media.
\end{remark}

\subsection{The Gibbs measure of spin glasses of
REM-type\label{SubSect_ProofGibbs}}

Let $m_{\star}$ be as defined in Theorem \ref{solving_gvp}, and $G=G_{m_{\star
}}$ the associated measure. We write $v^{2}:=\operatorname*{var}_{G}\left(
\phi\right)  $ for the variance of $\phi$ under $G.$ Then define%
\begin{equation}
a_{N}\overset{\mathrm{def}}{=}E_{G}(\phi)N+\omega(N),\label{Def_a_N}%
\end{equation}
where $\omega(N)\overset{\mathrm{def}}{=}-{\frac{1}{m_{\star}}}\log\sqrt{2\pi
v^{2}N}.$ For $\alpha\in\{1,\dots,2^{N}\}$ let us also abbreviate
$H_{N}(\alpha)\overset{{\text{def}}}{=}\sum_{i=1}^{N}\phi(X_{\alpha,i})$.
\newline

We begin with a technical result:

\begin{lemma}
\label{clt_zero} Assume that the measure $\mu\phi^{-1}$ on $\mathbb{R}$ is
non-lattice. Then, with the above notations, for any $t\in\mathbb{R}$%
\begin{equation}
\lim_{N\rightarrow\infty}2^{N}{\mathbb{P}}\left[  \sum_{i=1}^{N}\phi
(X_{1,i})-a_{N}\geq t\right]  ={\frac{1}{m_{\star}}}\mathrm{e}^{-m_{\star}%
t}.\label{Exponential}%
\end{equation}

\end{lemma}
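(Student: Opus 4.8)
The plan is to use a sharp large-deviation (Bahadur--Rao type) estimate for the i.i.d.\ sum $H_N(1)=\sum_{i=1}^N\phi(X_{1,i})$, together with the characterization of $m_\star$ from Theorem~\ref{solving_gvp}. First I would recall that the exponential tilt of $\mu$ by $m_\star\phi$ is exactly $G=G_{m_\star}$, that $E_G(\phi)=\Gamma_\phi'(m_\star)$, and that $v^2=\operatorname{var}_G(\phi)=\Gamma_\phi''(m_\star)$. The relevant tail threshold is $a_N=E_G(\phi)N+\omega(N)$, which sits essentially at the mean of $H_N(1)$ under the tilted law $G^{\otimes N}$. Writing $\Lambda(\lambda)=\Gamma_\phi(\lambda)$ for the cumulant generating function of $\phi$ under $\mu$, the Legendre transform evaluated at the slope $E_G(\phi)=\Gamma_\phi'(m_\star)$ is $\Lambda^*(E_G(\phi))=m_\star\Gamma_\phi'(m_\star)-\Gamma_\phi(m_\star)$, which by the entropy condition \eqref{entropy_condition} equals $\log 2$ (note $H(G_{m_\star}\mid\mu)=m_\star E_G\phi-\Gamma_\phi(m_\star)$, so this is just $H(G_{m_\star}\mid\mu)=\log 2$, consistent with the low-temperature case).

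The key step is the Bahadur--Rao refinement of Cram\'er's theorem: for an i.i.d.\ sum $S_N=\sum_{i=1}^N Y_i$ with $Y_i$ non-lattice and finite exponential moments, one has, for $x$ in the interior of the admissible range,
\[
{\mathbb P}[S_N\geq Nx]=\frac{e^{-N\Lambda^*(x)}}{\theta_x\sqrt{2\pi N\,\sigma_x^2}}\,(1+o(1)),
\]
where $\theta_x>0$ solves $\Lambda'(\theta_x)=x$ and $\sigma_x^2=\Lambda''(\theta_x)$. Here $Y_i=\phi(X_{1,i})$, the non-lattice assumption on $\mu\phi^{-1}$ is precisely what is hypothesized, and \eqref{Cond_ExponentialMoment} guarantees the required exponential moments. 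Applying this with $x=x_N:=a_N/N=E_G(\phi)+\omega(N)/N$, we get $\theta_{x_N}\to m_\star$ and $\sigma^2_{x_N}\to v^2$ by continuity (since $\omega(N)/N=O(N^{-1}\log N)\to 0$ and $\Lambda'$ is a strictly increasing diffeomorphism), and $N\Lambda^*(x_N)=N\Lambda^*(E_G(\phi))+\omega(N)\,\Lambda^{*\prime}(E_G(\phi))+o(1)$. Using $\Lambda^{*\prime}(E_G(\phi))=\theta_{E_G(\phi)}=m_\star$ and $N\Lambda^*(E_G(\phi))=N\log 2$, this gives
\[
{\mathbb P}[H_N(1)\geq a_N]=\frac{e^{-N\log 2}\,e^{-m_\star\omega(N)}}{m_\star\sqrt{2\pi v^2 N}}\,(1+o(1)).
\]
Since $e^{-m_\star\omega(N)}=\sqrt{2\pi v^2 N}$ by the definition of $\omega(N)$, the factor $\sqrt{2\pi v^2N}$ cancels exactly, leaving $2^N\,{\mathbb P}[H_N(1)\geq a_N]\to 1/m_\star$, which is \eqref{Exponential} at $t=0$.

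For general $t\in{\mathbb R}$, I would simply repeat the argument with the threshold $a_N+t$ in place of $a_N$, i.e.\ with $x_N=(a_N+t)/N$; the only change is an extra additive term $t$ inside $N\Lambda^*(\cdot)$, producing a factor $e^{-\Lambda^{*\prime}(E_G(\phi))\,t+o(1)}=e^{-m_\star t+o(1)}$, while $\theta$ and $\sigma^2$ still converge to $m_\star$ and $v^2$. This yields the stated limit $\tfrac{1}{m_\star}e^{-m_\star t}$. The main obstacle is handling the Bahadur--Rao expansion at a \emph{moving} point $x_N$ rather than a fixed one: one must check that the $o(1)$ error in the asymptotic is uniform over $x$ in a small neighborhood of $E_G(\phi)$, which follows from the standard proof of Bahadur--Rao (tilt to center the sum, apply a local CLT / Edgeworth estimate for non-lattice variables, the uniformity being inherited from the smooth dependence of the tilted density on the tilting parameter). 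Alternatively, one can absorb the shift $\omega(N)+t$ by first tilting by the fixed parameter $m_\star$ and then invoking a local central limit theorem for the tilted sum directly, which is arguably the cleanest route and makes the cancellation of $\sqrt{2\pi v^2 N}$ transparent.
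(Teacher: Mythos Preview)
Your proposal is correct. The paper's proof follows precisely the route you describe at the very end as the ``alternative'': it tilts by the \emph{fixed} parameter $m_\star$, rewrites
\[
2^N\,\mathbb{P}\Big[\sum_i\phi(X_{1,i})-a_N\geq t\Big]=\int_{t+\omega(N)}^\infty e^{-m_\star y}\,G_N(dy),
\]
where $G_N$ is the law of the centered sum under $G=G_{m_\star}$, and then applies an Edgeworth expansion (Feller, Vol.~II, Thm.~XVI.4.1) to $G_N$, checking that the Gaussian term gives the limit while the $N^{-1/2}$ correction and the $o(N^{-1/2})$ remainder contribute nothing.

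Your primary route via Bahadur--Rao at the moving level $x_N=(a_N+t)/N$ is of course just this same computation repackaged, but it carries the extra burden of verifying that the $o(1)$ in Bahadur--Rao is uniform in a shrinking neighborhood of $E_G(\phi)$; the paper's fixed-tilt approach sidesteps this entirely, since after tilting by $m_\star$ the threshold $t+\omega(N)$ enters only as an integration limit and the Edgeworth expansion is applied at a single law $G_N$. In short, both arguments are the same under the hood; the paper simply takes the cleaner path you yourself identified.
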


\begin{proof}
We use the usual transformation of measure argument, writing the probability
on the left-hand side of (\ref{Exponential}) in terms of a new sequence
$\left\{  \widetilde{X}_{i}\right\}  $ of independent random variables with
distribution function $G.$ As $G$ is equivalent to $\mu,$ also $\phi\left(
\widetilde{X}_{i}\right)  $ is non-lattice. We write $G_{N}$ for the
distribution of%
\[
\sum_{i=1}^{N}\left(  \phi(\widetilde{X}_{i})-E_{G}(\phi)\right)  ,
\]
and $\hat{G}_{N}$ for the standardized one: $\hat{G}_{N}\left(  \cdot\right)
=G_{N}\left(  v\sqrt{N}\cdot\right)  .$

By change of measure and integration by parts, it holds:
\[
{\mathbb{P}}\left[  \sum\nolimits_{i=1}^{N}\phi(X_{1,i})-a_{N}\geq t\right]
=\operatorname{exp}N\left[  \log E[\mathrm{e}^{m_{\star}\phi}]-m_{\star}%
E_{G}(\phi)\right]  \int_{t+\omega(N)}\mathrm{e}^{-m_{\star}y}G_{N}(dy)
\]
Recall that in low temperature,%
\[
\log E\left(  \mathrm{e}^{m_{\ast}\phi}\right)  -m_{\ast}E_{G}\left(
\phi\right)  =-\log2,
\]
and therefore%
\begin{align*}
2^{N}{\mathbb{P}}\left[  \sum\nolimits_{i=1}^{N}\phi(X_{1,i})-a_{N}\geq
t\right]   & =\int_{t+\omega(N)}\mathrm{e}^{-m_{\star}y}G_{N}(dy)\\
& =\int_{t+\omega(N)}\int_{m_{\star}y}^{\infty}\mathrm{e}^{-z}dzG_{N}(dy)\\
& =\int_{m_{\ast}\left(  t+\omega\left(  N\right)  \right)  }^{\infty
}dz\mathrm{e}^{-z}\hat{G}_{N}\left(  \left(  \frac{t+\omega(N)}{\sqrt{N}%
v},\frac{z}{\sqrt{N}vm_{\ast}}\right]  \right)  .
\end{align*}

We use now the assumption that $\phi(\widetilde{X}_{i})$ has a non-lattice
distribution. This implies that we can approximate the distribution function
of $\hat{G}_{N}$ by a smooth distribution function, up to an error of order
$o\left(  N^{-1/2}\right)  .$ More precisely, if we define
$\operatorname{GaussDF}$ to be the distribution function of the standard
normal distribution, and $\varphi$ the density, then, uniformly in $x,$%
\begin{equation}
\hat{G}_{N}\left(  \left(  -\infty,x\right]  \right)  =\operatorname{GaussDF}%
\left(  x\right)  -\frac{\mu_{3}}{6v^{3}\sqrt{N}}\left(  x^{2}-1\right)
\varphi\left(  x\right)  +o\left(  N^{-1/2}\right)  ,\label{Edgeworth}%
\end{equation}
where $\mu_{3}$ is the third moment of $\phi(\widetilde{X}_{i})-E_{G}(\phi).$
(see e.g. \cite{fellerII}, Theorem XVI.4.1). If we replace $\hat{G}_{N}$ above
by the standard normal distribution, and transform back, we get an expression%
\[
\operatorname{exp}\left(  {\frac{m_{\star}^{2}}{2}}Nv\right)  \int_{t}%
^{\infty}\operatorname{exp}\left[  -{\frac{1}{2Nv}}\left(  z-m_{\star
}Nv+\omega(N)\right)  ^{2}\right]  {\frac{dz}{\sqrt{2\pi Nv}}}=\int
_{t}^{\infty}\mathrm{e}^{-m_{\star}y+o(1)}dy.
\]
In order to prove the lemma, it therefore suffices to prove that the two error
summands in (\ref{Edgeworth}) contribute nothing in the $N\rightarrow\infty$
limit. This is evident for $o\left(  N^{-1/2}\right)  $ as%
\[
\int_{m_{\ast}\left(  t+\omega\left(  N\right)  \right)  }^{\infty
}dz\mathrm{e}^{-z}=O\left(  \sqrt{N}\right)  .
\]
For the middle Edgeworth term in (\ref{Edgeworth}), the special form is of no
importance, and we only use that it is of the form $h\left(  x\right)
/\sqrt{N}$ with a bounded smooth function $h:$%
\[
\int_{m_{\ast}\left(  t+\omega\left(  N\right)  \right)  }^{\infty
}dz\mathrm{e}^{-z}\left[  h\left(  \frac{z}{\sqrt{N}vm_{\ast}}\right)
-h\left(  \frac{t+\omega(N)}{\sqrt{N}v}\right)  \right]  =O\left(  1\right)  .
\]

\end{proof}

\begin{proposition}
\label{pp_abstract} Within the above setting:

\begin{itemize}
\item[a)] The point process $\sum_{\alpha}\delta_{H_{N}(\alpha)-a_{N}}$
converges weakly to a Poisson point process with intensity measure
$\mathrm{e}^{-m_{\ast}t}dt.$

\item[b)] The point process $\sum_{\alpha}\delta_{\exp\left[  H_{N}%
(\alpha)-a_{N}\right]  }$ converges weakly to a $\operatorname{PP}\left(
m_{\ast}\right)  .$
\end{itemize}
\end{proposition}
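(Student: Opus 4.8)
The plan is to deduce part (a) from Lemma \ref{clt_zero} by the standard criterion for convergence of point processes to a Poisson point process on $\mathbb{R}$, namely convergence of the avoidance functions (equivalently, of Laplace functionals) together with a check that the limiting intensity is locally finite. First I would fix a bounded interval $(s,t]\subset\mathbb{R}$ and observe that $M_N\big((s,t]\big):=\#\{\alpha:H_N(\alpha)-a_N\in(s,t]\}$ is a sum of $2^N$ i.i.d. Bernoulli random variables, each with success probability $p_N(s,t):=\mathbb{P}[H_N(1)-a_N\in(s,t]]$. By Lemma \ref{clt_zero}, $2^N p_N(s,t)\to \tfrac{1}{m_\star}(\mathrm{e}^{-m_\star s}-\mathrm{e}^{-m_\star t})$, which is exactly the $\mathrm{e}^{-m_\star u}\,du$-mass of $(s,t]$. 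The classical Poisson limit theorem for triangular arrays of rare events (e.g. via the inequality $|\,\mathbb{P}[\mathrm{Bin}(n,p)=0]-\mathrm{e}^{-np}\,|\le np^2$, or Le Cam's bound) then gives $M_N\big((s,t]\big)\Rightarrow \mathrm{Poisson}\big(\int_s^t \mathrm{e}^{-m_\star u}\,du\big)$, and the same argument applied to a finite disjoint union of intervals, using independence of the $\alpha$'s, yields joint convergence of the counting variables. Since the intervals $(s,t]$ form a convergence-determining class and the limiting intensity $\mathrm{e}^{-m_\star u}\,du$ is a Radon measure on $\mathbb{R}$ (finite on sets bounded below, which is all that matters since the process has an a.s.\ finite number of points above any level and only finitely many points can pile up), this establishes part (a).

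For part (b) I would simply transport the convergence through the continuous map $x\mapsto \mathrm{e}^x$, which is a homeomorphism from $\mathbb{R}$ onto $(0,\infty)$. Convergence in distribution of point processes is preserved under a continuous proper map of the state space, and the pushforward of the intensity $\mathrm{e}^{-m_\star t}\,dt$ under $t\mapsto \mathrm{e}^{t}=:r$ is, by the substitution $r=\mathrm{e}^t$, $dt=dr/r$, the measure $\mathrm{e}^{-m_\star t}\,dt = r^{-m_\star}\,\tfrac{dr}{r}=r^{-m_\star-1}\,dr$ on $(0,\infty)$. That is precisely the intensity defining $\operatorname{PP}(m_\star)$ in the paragraph preceding Theorem \ref{limiting_gibbs_measure}, so $\sum_\alpha \delta_{\exp[H_N(\alpha)-a_N]}$ converges weakly to $\operatorname{PP}(m_\star)$.

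The one point that needs a little care — and which I expect to be the main technical obstacle — is handling the behavior near the boundary of the state space, i.e.\ the lack of tightness "at $-\infty$" (for (a)) or "near $0$" (for (b)). The total number of points is infinite, so one must work with vague convergence on $\mathbb{R}$ (resp.\ on $(0,\infty)$) rather than weak convergence of finite measures, and verify that no mass escapes to the closed boundary: for any fixed level, Lemma \ref{clt_zero} controls the expected number of points above it, and a truncation argument restricts attention to the region $\{H_N(\alpha)-a_N>-R\}$, letting $R\to\infty$ at the end. Equivalently one checks that for each fixed $t$ the number of $\alpha$ with $H_N(\alpha)-a_N>t$ is tight — immediate since its mean converges by Lemma \ref{clt_zero} — and that this number is eventually positive; then Kallenberg's theorem applies on the half-line $(t,\infty)$ for every $t$, and consistency over $t$ gives the full statement. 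I would also remark that the non-lattice hypothesis enters only through Lemma \ref{clt_zero}, so nothing further is required here.
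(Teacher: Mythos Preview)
Your proof is correct and rests on the same two ingredients as the paper's: the i.i.d.\ structure over $\alpha$ and the tail asymptotics of Lemma~\ref{clt_zero}. The paper, however, packages part~(a) more compactly via Laplace functionals: for $F\in C_o(\mathbb{R})$ it writes
\[
\mathbb{E}\,\mathrm{e}^{-\sum_\alpha F(H_N(\alpha)-a_N)}=\Bigl\{1-\mathbb{E}\bigl[1-\mathrm{e}^{-F(H_N(1)-a_N)}\bigr]\Bigr\}^{2^N},
\]
and then Lemma~\ref{clt_zero} gives the limit $\exp\bigl[-\int(1-\mathrm{e}^{-F(t)})\mathrm{e}^{-m_\star t}\,dt\bigr]$ in one line. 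Your route through counts on intervals and Kallenberg's criterion is equivalent but longer, and the extra paragraph about tightness near $-\infty$ is not actually needed once you are working with the vague topology and test functions of compact support (which is what ``weak convergence of point processes'' means here); the Laplace-functional argument sidesteps this entirely because $F$ has compact support from the outset. For part~(b) the paper simply says it is evident from~(a); your explicit pushforward computation of the intensity under $t\mapsto\mathrm{e}^t$ is exactly the content of that remark. One small slip: the counts $M_N(I_j)$ on disjoint intervals $I_j$ are not independent across $j$ (a given $\alpha$ lands in at most one interval), so ``using independence of the $\alpha$'s'' should be read as the multinomial-to-independent-Poisson limit, which is of course standard.
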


\begin{proof}
b) is evident from a). For the first claim, we shall exploit the equivalence
of weak convergence and convergence of Laplace functionals. For a continuous
function with compact support $F\in C_{o}({\mathbb{R}})$ we have:
\begin{align*}
{\mathbb{E}}\mathrm{e}^{-\sum\limits_{\alpha}F(H_{N}(\alpha)-a_{N})}  &
=\left\{  {\mathbb{E}}\mathrm{e}^{-F\left(  H_{N}(1)-a_{N}\right)  }\right\}
^{2^{N}}\\
& =\left\{  1-{\mathbb{E}}\left[  1-\mathrm{e}^{-F\left(  H_{N}(1)-a_{N}%
\right)  }\right]  \right\}  ^{2^{N}}.
\end{align*}
By Lemma \ref{clt_zero}, this converges to $\operatorname{exp}\left[
-\int\left(  1-\mathrm{e}^{-F(t)}\right)  \mathrm{e}^{-m_{\star}t}dt\right]
$, settling a).
\end{proof}

Since
\[
{{\mathcal{G}}}_{N,\phi}(\alpha)=\frac{\exp\left[  H_{N}(\alpha)-a_{N}\right]
}{\sum_{\alpha^{\prime}}\operatorname{exp}\left[  H_{N}(\alpha^{\prime}%
)-a_{N}\right]  },
\]
to prove Theorem \ref{limiting_gibbs_measure} it suffices to prove that in low
temperature the normalization commutes with taking the $N\rightarrow\infty$
limit. For this, we have the following:

\begin{lemma}
\label{gan} Suppose $\phi$ is such that the system is in low temperature, and
let $\varepsilon>0$. There exists $C>0$ such that
\[
{\mathbb{P}}\left[  \sum\nolimits_{\alpha}\operatorname{exp}\left[
H_{N}(\alpha)-a_{N}\right]  1_{H_{N}(\alpha)-a_{N}\notin\lbrack-C,C]}%
\geq\varepsilon\right]  \leq\varepsilon,
\]
for large enough $N$.
\end{lemma}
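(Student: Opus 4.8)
The plan is to split the sum $\sum_\alpha \exp[H_N(\alpha)-a_N]\mathbf{1}_{H_N(\alpha)-a_N\notin[-C,C]}$ into a "large-value" part (indices with $H_N(\alpha)-a_N > C$) and a "small-value" part (indices with $H_N(\alpha)-a_N < -C$), and to control each part separately by first moment / Markov estimates, using Lemma \ref{clt_zero} and the exponential-moment hypothesis \eqref{Cond_ExponentialMoment} together with the low-temperature characterization of $m_\star$. For the large-value part, I would estimate $\mathbb{E}\big[\sum_\alpha \exp[H_N(\alpha)-a_N]\mathbf{1}_{H_N(\alpha)-a_N > C}\big] = 2^N\,\mathbb{E}\big[\exp[H_N(1)-a_N]\mathbf{1}_{H_N(1)-a_N>C}\big]$; using the same change-of-measure identity as in the proof of Lemma \ref{clt_zero} (which turns $2^N \mathbb{P}[\,\cdot\,]$ into an integral against the standardized law $\hat G_N$ weighted by $\mathrm{e}^{-m_\star y}$), this expectation becomes $\int_C^\infty \mathrm{e}^{y}\,(2^N\,d\mathbb{P}[H_N(1)-a_N\in dy]) \approx \int_C^\infty \mathrm{e}^{y}\mathrm{e}^{-m_\star y}\,dy = \frac{1}{1-m_\star}\mathrm{e}^{-(1-m_\star)C}$, which is finite precisely because $m_\star<1$ in low temperature, and can be made $< \varepsilon^2$ by choosing $C$ large; then Markov gives the bound $\leq \varepsilon$ for that part.

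For the small-value part, the point is that the total mass $\sum_\alpha \exp[H_N(\alpha)-a_N]$ localized below $-C$ is itself small, but one must be slightly careful because there are exponentially many such $\alpha$. Here I would again pass to a first-moment bound: $\mathbb{E}\big[\sum_\alpha \exp[H_N(\alpha)-a_N]\mathbf{1}_{H_N(\alpha)-a_N<-C}\big] = 2^N\,\mathbb{E}\big[\exp[H_N(1)-a_N]\mathbf{1}_{H_N(1)-a_N<-C}\big]$, and rewrite this through the same transformed-measure identity as $\int_{-\infty}^{-C}\mathrm{e}^{y}\,(2^N\,d\mathbb{P}[H_N(1)-a_N\in dy])$. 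Using the Edgeworth/Gaussian approximation from \eqref{Edgeworth} (valid by the non-lattice assumption — which is in force in this section) one sees $2^N\,d\mathbb{P}[H_N(1)-a_N\in dy]$ is comparable to $\mathrm{e}^{-m_\star y}\,dy$ on bounded windows; extending the estimate down to $-\infty$ requires a uniform tail bound, which one gets by noting that moving the threshold $a_N$ only costs a shift and the left tail of $G_N$ has Gaussian-type decay after standardization, so $\int_{-\infty}^{-C}\mathrm{e}^{(1-m_\star)y}\,dy = \frac{1}{1-m_\star}\mathrm{e}^{-(1-m_\star)(-C)}$—wait, that diverges as $C\to\infty$ in the wrong direction; in fact $\int_{-\infty}^{-C}\mathrm{e}^{(1-m_\star)y}dy = \frac{1}{1-m_\star}\mathrm{e}^{-(1-m_\star)C}$ is small, so this is fine. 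Choosing $C$ large makes this $<\varepsilon^2$ as well, and Markov finishes this part.

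Combining the two parts (with $2C$ in place of $C$ if one wants a single constant, and adding the two failure probabilities) gives the claim. The main obstacle I anticipate is the uniform control of $2^N\,\mathbb{P}[H_N(1)-a_N \geq t]$ over $t$ ranging in an unbounded set rather than at fixed $t$: Lemma \ref{clt_zero} is only stated for fixed $t$, so one needs the effective version of it provided by the Edgeworth expansion \eqref{Edgeworth} together with an integrable envelope, i.e. a bound of the form $2^N\,\mathbb{P}[H_N(1)-a_N\geq t]\leq C'\,\mathrm{e}^{-m_\star t}$ uniformly in $t\geq -C$ and in $N$ large. This is exactly what the change-of-measure computation in the proof of Lemma \ref{clt_zero} yields once one keeps track of the constants rather than passing to the limit, so the "hard part" is really just bookkeeping rather than a new idea.
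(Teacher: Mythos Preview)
Your treatment of the lower tail $\{H_N(\alpha)-a_N<-C\}$ is correct and is exactly what the paper does: a Markov bound followed by the change of measure and the Edgeworth expansion, giving a contribution of order $\mathrm{e}^{-(1-m_\star)C}$.

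The upper tail, however, has a genuine error. You write
\[
\int_C^\infty \mathrm{e}^{y}\,\mathrm{e}^{-m_\star y}\,dy \;=\; \frac{1}{1-m_\star}\,\mathrm{e}^{-(1-m_\star)C},
\]
but since $m_\star<1$ in low temperature the integrand $\mathrm{e}^{(1-m_\star)y}$ is increasing and the integral over $[C,\infty)$ diverges. This is not an artifact of the heuristic density $\mathrm{e}^{-m_\star y}dy$: the honest first moment $2^N\,\mathbb{E}\bigl[\mathrm{e}^{H_N(1)-a_N}\mathbf{1}_{H_N(1)-a_N>C}\bigr]$ grows to infinity with $N$ (indeed $2^N\,\mathbb{E}[\mathrm{e}^{H_N(1)-a_N}]=\exp\bigl[N(\log 2+\Gamma_\phi(1)-\Gamma_\phi'(m_\star))\bigr]\mathrm{e}^{-\omega(N)}$, and strict convexity of $\Gamma_\phi$ forces the exponent to be strictly positive). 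So a Markov/first-moment argument cannot work on this side; this reflects the fact that the limiting $\operatorname{PP}(m_\star)$ has $\sum_i\xi_i<\infty$ a.s.\ but $E[\sum_i\xi_i]=\infty$. The paper avoids this by not taking an expectation at all for the upper tail: it simply observes
\[
\mathbb{P}\Bigl[\sum_\alpha \mathrm{e}^{H_N(\alpha)-a_N}\mathbf{1}_{H_N(\alpha)-a_N\geq C}\geq\varepsilon\Bigr]
\;\leq\;\mathbb{P}\bigl[\exists\,\alpha:\,H_N(\alpha)-a_N\geq C\bigr]
\;\leq\;2^N\,\mathbb{P}[H_N(1)-a_N\geq C]\;\leq\;\mathrm{const}\cdot\mathrm{e}^{-m_\star C},
\]
the last step being Lemma~\ref{clt_zero}. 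Replace your upper-tail step by this union bound and the rest of your plan goes through.
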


\begin{proof}
First, by Lemma \ref{clt_zero} we clearly have that
\begin{align*}
{\mathbb{P}}\left[  \sum\nolimits_{\alpha}\mathrm{e}^{H_{N}(\alpha)-a_{N}%
}1_{H_{N}(\alpha)-a_{N}\geq C}\geq\varepsilon\right]   & \leq{\mathbb{P}%
}\left[  \exists\alpha\in\Sigma_{N}:H_{N}(\alpha)-a_{N}\geq C\right] \\
& \leq2^{N}{\mathbb{P}}\left[  H_{N}(1)-a_{N}\geq C\right]  \leq
\operatorname{const}\times\mathrm{e}^{-m_{\star}C}%
\end{align*}
for large enough $N,$ which can be made arbitrarily small by choosing $C$
large enough. So, it remains to prove that we can find $C$ such that
\[
{\mathbb{P}}\left[  \sum\nolimits_{\alpha}\mathrm{e}^{H_{N}(\alpha)-a_{N}%
}1_{H_{N}(\alpha)-a_{N}\leq-C}\geq\varepsilon\right]  \leq{\frac{\varepsilon
}{2}}.
\]
To see the last inequality, remark that the left hand side is bounded by%
\[
\frac{2^{N}}{\varepsilon}\mathbb{E}\left[  \mathrm{e}^{H_{N}(1)-a_{N}}%
1_{H_{N}(\alpha)-a_{N}\leq-C}\right]  .
\]
We proceed along the lines of Lemma \ref{clt_zero}:%
\begin{align*}
& 2^{N}{\mathbb{E}}\left(  \mathrm{e}^{H_{N}(1)-a_{N}}1_{H_{N}(1)-a_{N}\leq
-C}\right) \\
& =2^{N}\mathrm{e}^{N\log E[\operatorname{exp}m\phi]-NE_{G}(\phi)}%
\mathrm{e}^{-\omega(N)}\int\limits_{-\infty}^{-C+\omega(N)}\mathrm{e}%
^{(1-m_{\star})y}G_{N}(dy)\\
& =\mathrm{e}^{-\omega(N)}\int\limits_{-\infty}^{-C+\omega(N)}\mathrm{e}%
^{(1-m_{\star})y}G_{N}(dy).
\end{align*}
We again use (\ref{Edgeworth}). Important is now only that $G_{N},$ up to a
signed measure $R_{N}$ of total variation $o\left(  N^{-1/2}\right)  ,$ is
given by a (signed) measure $h_{N}\left(  y/\sqrt{N}\right)  dy/\sqrt{N},$
where $h_{N}$ bounded, uniformly in $N$. Using that $m_{\star}<1,$ we get for
the smooth part%
\begin{align*}
\mathrm{e}^{-\omega(N)}\int\limits_{-\infty}^{-C+\omega(N)}\mathrm{e}%
^{(1-m_{\star})y}\frac{h_{N}\left(  y/\sqrt{N}\right)  }{\sqrt{N}}dy  &
\leq\operatorname{const}\times\frac{\mathrm{e}^{-\omega(N)}}{\sqrt{N}}%
\int\limits_{-\infty}^{-C+\omega(N)}\mathrm{e}^{(1-m_{\star})y}dy\\
& =\operatorname{const}\times\mathrm{e}^{-C(1-m_{\star})}\frac{1}{(1-m_{\star
})\sqrt{N}}\mathrm{e}^{-m_{\star}\omega(N)}\\
& =\operatorname{const}\times\mathrm{e}^{-C(1-m_{\star})}\frac{\sqrt{2\pi
v^{2}}}{1-m_{\star}}.
\end{align*}
Taking $C$ large, we can make that arbitrarily small. For the $R_{N}$-part, we
have by partial integration and Fubini:%
\begin{align*}
\mathrm{e}^{-\omega(N)}\int\limits_{-\infty}^{-C+\omega(N)}\mathrm{e}%
^{(1-m_{\star})y}R_{N}(dy)  & =\int\limits_{-\infty}^{-C+\omega(N)}%
R_{N}(dy)\int\limits_{-\infty}^{(1-m_{\star})y}dz\mathrm{e}^{z}\\
& =\mathrm{e}^{-\omega(N)}\int\limits_{-\infty}^{(1-m_{\star})\left(
-C+\omega\left(  N\right)  \right)  }dz\mathrm{e}^{z}\\
& \times R_{N}\left(  \left(  \frac{z}{1-m_{\star}},-C+\omega(N)\right]
\right) \\
& =o\left(  N^{-1/2}\right)  \mathrm{e}^{-\omega(N)}\exp\left[  (1-m_{\star
})\left(  -C+\omega\left(  N\right)  \right)  \right] \\
& =o\left(  1\right)  .
\end{align*}

\end{proof}

\begin{proof}
[Proof of Theorem \ref{limiting_gibbs_measure}]We denote by $M$ the space of
Radon measures on $(0,\infty)$ endowed with the vague topology. By
${{\mathcal{H}}}_{N}$ we denote the point process associated to the collection
of points $(\operatorname{exp}[H_{N}(\alpha)-a_{N}],\alpha\in\Sigma_{N})$ and
${\mathcal{H}}$ be its weak limit. We choose a continuous function
$h:{\mathbb{R}}_{+}\rightarrow{\mathbb{R}}_{+}$ with $h(x)=x$ for $x\in
\lbrack1/C,C]$, $h(x)\leq x\;\forall x$ and $h(x)=0$ for $x\notin
\lbrack1/2C,2C]$. Then $\int h\,d{{\mathcal{H}}}_{N}$ converges weakly to
$\int h\,d{\mathcal{H}}$ by continuity of the mapping $M\ni\Xi\rightarrow
\int\,hd\Xi$. By Lemma \ref{gan}, to $\varepsilon>0$ we can find $C>0$ large
enough such that
\[
{\mathbb{P}}\left[  \int_{0}^{1/C}xd\,{{\mathcal{H}}}_{N}+\int_{C}^{\infty
}xd\,{{\mathcal{H}}}_{N}\geq\varepsilon\right]  \leq\varepsilon,
\]
uniformly in $N$, from which we see by approximation that $\int_{0}^{\infty
}xd\,{\mathcal{H}}_{N}$ converges weakly to $\int_{0}^{\infty}xd\,{\mathcal{H}%
}$. This also implies that $({\mathcal{H}}_{N},\int_{0}^{\infty}%
xd\,{\mathcal{H}}_{N})$ converges weakly towards $({\mathcal{H}},\int
_{0}^{\infty}x\,d{\mathcal{H}})$. Theorem \ref{limiting_gibbs_measure} then
clearly follows from the continuity of the mapping $M\times(0,\infty
)\rightarrow M$ defined through $(\Xi,a)\mapsto\Xi\theta_{a}^{-1}$ with
$\theta_{a}:(0,\infty)\rightarrow(0,\infty)$ and $\theta_{a}(x)\overset
{{\text{def}}}{=}x/a$.
\end{proof}

\subsection{The free energy of the REM+Cavity}

\begin{proof}
[Proof of Proposition \ref{propvariational}]Performing the trace over the
Ising-spins we have:
\begin{equation}
f_{N}(\beta)={\frac{1}{N}}\log2^{-N}\sum_{\alpha}\operatorname{exp}\left[
{\beta X_{\alpha}+\sum_{i}\log\cosh(\beta g_{\alpha,i})}\right]
,\label{sumout}%
\end{equation}
and we write the $X_{\alpha}$'s as the sum of $N$ independent standard
Gaussians $X_{\alpha,i}$, i.e. $X_{\alpha}=\sum_{i=1}^{N}X_{\alpha,i}$. By
Theorems \ref{solving_gvp} and \ref{equiv_PVP_GVP} applies to the function
$\phi(x_{1},x_{2})\overset{{\text{def}}}{=}\beta x_{1}+\log\cosh(\beta x_{2})$
and $\mu(x_{1},x_{2})$ a standard, bivariate Gaussian.

The high temperature region $\Gamma_{\phi}^{\prime}(1)-\Gamma_{\phi}%
(1)\leq\log2$ is equivalent to%
\[
E\left[  \cosh(\beta g)\log\cosh(\beta g)\right]  \leq\mathrm{e}^{\beta^{2}%
/2}\log2,
\]
$g$ a standard normal, and $E$ here the expectation with respect to $g.$ So we
have to identify this region. We prove that there is a unique $\beta
_{\mathrm{cr}}>0$ such that this inequality holds if and only if $\beta
\leq\beta_{\mathrm{cr}}.$ To prove that, consider%
\[
H(\beta)\overset{\mathrm{def}}{=}\mathrm{e}^{-\beta^{2}/2}E[\log\cosh(\beta
g)\cosh(\beta g)].
\]
Then%
\begin{align*}
{\frac{dH}{d\beta}}  & =-\beta\mathrm{e}^{-\beta^{2}/2}E[\cosh(\beta
g)\log\cosh(\beta g)]+\mathrm{e}^{-\beta^{2}/2}E[g\sinh(\beta g)]\\
& +\mathrm{e}^{-\beta^{2}/2}E[g\log\cosh(\beta g)\sinh(\beta g)]\\
& =\beta\mathrm{e}^{-\beta^{2}/2}\left\{  E[\cosh(\beta g)]+E[\sinh^{2}(\beta
g)\cosh(\beta g)^{-1}]\right\}  ,
\end{align*}
by Gaussian partial integration. So the derivative of $H$ is positive. It is
easy to see that $\lim_{\beta\rightarrow\infty}H\left(  \beta\right)
=\infty.$ So it follows that there is a unique $\beta_{\mathrm{cr}}>0$ such
that $H\left(  \beta_{\mathrm{cr}}\right)  =\log2,$ and $H\left(
\beta\right)  \leq\log2$ if and only if $\beta\leq\beta_{\mathrm{cr}}.$

If $\beta\leq\beta_{\mathrm{cr}},$ then $f\left(  \beta\right)  =\log E_{\mu
}\mathrm{e}^{m\phi}=\beta^{2}/2.$ If $\beta>\beta_{\mathrm{cr}},$ then we have
to determine $m_{\ast}$ according to (\ref{entropy_condition}) which gives
(\ref{entropy_condition_remcavity}), and plug it into (\ref{FreeEnergy_Parisi}%
) which gives the expression for the free energy (\ref{FreeEnergy_REMC}).
\end{proof}

\subsection{The Gibbs measure of the REM+Cavity\label{SubSect_GibbsREMC}}

\begin{proof}
[Proof of Proposition \ref{point_process}]Performing the trace over the Ising
spins, the Gibbs weight of the pure state $\alpha\in\{1,\dots,2^{N}\}$ in the
REM+Cavity reads
\[
{{\mathcal{G}}}_{\beta,N}^{\left(  1\right)  }(\alpha)=\operatorname{exp}%
\left[  \beta X_{\alpha}+\sum_{i=1}^{N}\log\cosh(\beta g_{\alpha,i})\right]
\Bigg /\sum_{\alpha^{\prime}}\operatorname{exp}\left[  \beta X_{\alpha
^{\prime}}+\sum_{i=1}^{N}\log\cosh(\beta g_{\alpha^{\prime},i}).\right]
\]
As in the proof of the Proposition \ref{propvariational}, we may then replace
the $X_{\alpha}$ with $\sum_{i=1}^{N}X_{\alpha,i}$ for a double sequence of
independent standard Gaussians $X_{\alpha,i}$, and then we apply Theorem
\ref{limiting_gibbs_measure}.
\end{proof}

For the proof of the other results, we need some remarkable properties of the
point processes $\operatorname{PP}\left(  m\right)  $.

\begin{lemma}
\label{formulas_rem} Assume that $\left\{  v_{\alpha}\right\}  _{\alpha
\in\mathbb{N}}$ are the points of a $\operatorname{PP}\left(  m\right)  $.
Consider also, independently of this point process, a sequence $\left\{
(U_{\alpha},V_{\alpha})\right\}  _{\alpha\in\mathbb{N}}$ of i.i.d. two
dimensional square integrable random vectors satisfying $V_{\alpha}\geq1.$
Then the following formulas hold:%
\begin{equation}
E\left[  {\frac{\sum_{\alpha}v_{\alpha}U_{\alpha}}{\sum_{\alpha}v_{\alpha
}V_{\alpha}}}\right]  ={\frac{E\left[  UV^{m_{\star}-1}\right]  }{E\left[
V^{m_{\star}}\right]  },}\label{Tala1}%
\end{equation}%
\begin{equation}
E\left[  {\frac{\sum_{\alpha\neq\beta}v_{\alpha}v_{\beta}U_{\alpha}U_{\beta}%
}{\left(  \sum_{\alpha}v_{\alpha}V_{\alpha}\right)  ^{2}}}\right]  =m_{\star
}\left(  {\frac{E\left[  UV^{m_{\star}-1}\right]  }{E\left[  V^{m_{\star}%
}\right]  }}\right)  ^{2},\label{Tala2}%
\end{equation}%
\begin{equation}
E\left[  {\frac{\sum_{\alpha}v_{\alpha}^{2}U_{\alpha}^{2}}{\left(
\sum_{\alpha}v_{\alpha}V_{\alpha}\right)  ^{2}}}\right]  =(1-m_{\star}%
){\frac{E\left[  U^{2}V^{m_{\star}-2}\right]  }{E\left[  V^{m_{\star}}\right]
}.}\label{Tala3}%
\end{equation}

\end{lemma}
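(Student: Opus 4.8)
The plan is to reduce all three identities to a single master computation on the Poisson point process $\operatorname{PP}(m)$ via the Campbell / Mecke formalism, combined with the Laplace-transform representation of the normalizing denominator. Write $S \overset{\mathrm{def}}{=} \sum_\alpha v_\alpha V_\alpha$; the key device is to remove the denominator by the elementary identity $1/S = \int_0^\infty e^{-\lambda S}\,d\lambda$ (and $1/S^2 = \int_0^\infty \lambda e^{-\lambda S}\,d\lambda$), which is legitimate since $m<1$ forces $\sum_\alpha v_\alpha < \infty$ a.s. and $V_\alpha \geq 1$, so $S>0$ a.s. and the integrals converge. Conditioning on the point process and using independence of the marks $(U_\alpha,V_\alpha)$, the expectation $E[e^{-\lambda S}] = E\big[\prod_\alpha E[e^{-\lambda v_\alpha V}]\big]$, and by the exponential formula for a Poisson process with intensity $t^{-m-1}\,dt$ this equals $\exp\big[-\int_0^\infty (1-E[e^{-\lambda t V}])\,t^{-m-1}\,dt\big]$. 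A change of variables $s = \lambda t V$ inside, after taking the $V$-expectation out, gives $\int_0^\infty (1-E[e^{-\lambda tV}]) t^{-m-1}\,dt = \lambda^m E[V^m] \cdot c_m$, where $c_m \overset{\mathrm{def}}{=} \int_0^\infty (1-e^{-s}) s^{-m-1}\,ds = \Gamma(1-m)/m$ is a finite constant depending only on $m$. Hence $E[e^{-\lambda S}] = \exp[-c_m \lambda^m E[V^m]]$.

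For the numerators I would insert the extra factor before integrating. For \eqref{Tala1}: $E[\sum_\alpha v_\alpha U_\alpha / S] = \int_0^\infty E[\sum_\alpha v_\alpha U_\alpha\, e^{-\lambda S}]\,d\lambda$, and by Mecke's formula the inner expectation is $\int_0^\infty t \cdot E[U e^{-\lambda t V}]\cdot E[e^{-\lambda S}]\, t^{-m-1}\,dt$ (the point at $t$ is pulled out, carrying its own mark $(U,V)$, while the rest of the process still contributes $E[e^{-\lambda S}]$; one uses that the extra factor $e^{-\lambda t V}$ for that point is already accounted for). The $t$-integral is $\lambda^{m-1} E[UV^{m-1}] \cdot c_{m-1}'$ for an explicit Gamma constant, times $\exp[-c_m\lambda^m E[V^m]]$, and then $\int_0^\infty \lambda^{m-1} e^{-c_m \lambda^m E[V^m]}\,d\lambda$ is again an elementary Gamma integral; collecting the constants — and here the Gamma-function identities must conspire so that everything except $E[UV^{m-1}]/E[V^m]$ cancels — yields the stated right-hand side. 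For \eqref{Tala2} one does the same with $1/S^2 = \int_0^\infty \lambda e^{-\lambda S}\,d\lambda$ and the second-order Mecke formula, pulling out two distinct points $t_1 \neq t_2$ with independent marks; this produces $\big(E[UV^{m-1}]\big)^2$ and an extra combinatorial factor that must come out to $m$. For \eqref{Tala3} one pulls out a single point but with the factor $v_\alpha^2 U_\alpha^2 = t^2 U^2$, giving $E[U^2 V^{m-2}]$ and a prefactor that should evaluate to $1-m$.

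The routine part is the bookkeeping of Gamma-function constants; the substantive point I would flag as the main obstacle is justifying the interchange of $E$, the $\lambda$-integral, and the point-process sum — i.e. applying Fubini–Tonelli and the Mecke equations rigorously when the integrand is not sign-definite (note $U$ can be negative in \eqref{Tala1}–\eqref{Tala2}). I would handle this by first proving absolute-integrability bounds: replace $U$ by $|U|$ and $U_\alpha U_\beta$ by $|U_\alpha||U_\beta|$ to get finite upper bounds using the square-integrability hypothesis $E[U^2]<\infty$ together with $V\geq 1$ (which makes $E[V^{m}], E[V^{m-1}], E[V^{m-2}]$ all finite, since $V^{m-1},V^{m-2}\leq 1$), so the whole scheme is dominated and Fubini applies. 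An alternative, perhaps cleaner, route to \eqref{Tala2} and \eqref{Tala3} once \eqref{Tala1} is in hand is to use the well-known fact that the normalized points $\bar v_\alpha = v_\alpha/\sum_\beta v_\beta$ form a $\operatorname{PD}(m)$ with $E[\sum \bar v_\alpha^2] = 1-m$ and $E[\sum_{\alpha\neq\beta}\bar v_\alpha^2 \bar v_\beta^2]$-type moments known explicitly, reweighting by the marks via a size-biasing argument; but I expect the direct Laplace-transform computation above to be the most self-contained.
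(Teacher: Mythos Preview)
Your proposal is sound, and in fact it goes well beyond what the paper itself does: the paper gives no proof of this lemma at all, merely citing \cite[Theorem~6.4.5]{talagrand}. So there is no ``paper's own proof'' to compare against in any substantive sense.

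That said, your Laplace-transform / Mecke-formula approach is a correct and standard way to establish these identities, and your outline is accurate. I checked the bookkeeping for \eqref{Tala1} and \eqref{Tala3}: with $c_m=\Gamma(1-m)/m$ one gets $E[e^{-\lambda S}]=\exp[-c_m\lambda^m E(V^m)]$; pulling out one point via Mecke yields the factor $\lambda^{m-1}\Gamma(1-m)E[UV^{m-1}]$ (respectively $\lambda^{m-2}\Gamma(2-m)E[U^2V^{m-2}]$), and the residual $\lambda$-integral $\int_0^\infty \lambda^{m-1}\exp[-c_m\lambda^m E(V^m)]\,d\lambda=1/(\Gamma(1-m)E[V^m])$ then produces exactly the claimed right-hand sides, using $\Gamma(2-m)=(1-m)\Gamma(1-m)$ for \eqref{Tala3}. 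The second-order Mecke computation for \eqref{Tala2} goes through analogously and the constant indeed comes out to $m$. Your integrability remarks are also correct: $V\ge1$ gives $V^{m-1},V^{m-2}\le1$, so $|UV^{m-1}|\le|U|$ and $|U^2V^{m-2}|\le U^2$, and square-integrability of $(U,V)$ plus $m<1$ handles $E[V^m]$; this justifies Fubini after replacing $U$ by $|U|$.

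One small quibble: in your sentence ``the extra factor $e^{-\lambda t V}$ for that point is already accounted for'' you should be explicit that Mecke adds the point back to the process, so the contribution $e^{-\lambda tV}$ of the tagged point must be written separately while the remaining $E[e^{-\lambda S}]$ is over the \emph{original} process --- you have this right in spirit but the phrasing is slightly ambiguous.
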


For a proof, see \cite[Theorem 6.4.5]{talagrand}.

\begin{proof}
[Proof of Proposition \ref{ultrametricity}]We fix some notation. Let
\[
w_{\alpha}^{(1,2)}=\operatorname{exp}\left(  \beta X_{\alpha}+\sum_{i=3}%
^{N}\log\cosh(\beta g_{\alpha,i})-a_{N}\right)  ,
\]
stand for the (not normalized) Boltzmann weight of the pure state $\alpha$
with a cavity in the sites $i=1,2$, and the \textquotedblleft centering
constant\textquotedblright\ being given by (\ref{Def_a_N}) specialized to the
present setting. Remark that these weights somewhat differ from the original
ones without the cavity, but we clearly still have, in total analogy with
Proposition \ref{pp_abstract}, weak convergence of $(w_{\alpha}^{(1,2)})$
towards a collection $(v_{\alpha})$ distributed according to a
$\operatorname{PP}\left(  m_{\ast}\right)  .$

For the proof of claim (\ref{overlap1}), expanding the quadratic terms, by
symmetry and obvious bounds we have:%
\begin{align*}
{\mathbb{E}}\left[  \left\langle \delta_{\alpha=\alpha^{\prime}}\left(
q(\sigma,\sigma^{\prime})-q_{\star}\right)  ^{2}\right\rangle _{\beta
,N}^{\otimes2}\right]   & \sim{\mathbb{E}}\left[  \left\langle \delta
_{\alpha=\alpha^{\prime}}\sigma_{1}\sigma_{2}\sigma_{1}^{\prime}\sigma
_{2}^{\prime}\right\rangle _{\beta,N}^{\otimes2}\right] \\
& -2q_{\star}{\mathbb{E}}\left[  \left\langle \delta_{\alpha=\alpha^{\prime}%
}\sigma_{1}\sigma_{1}^{\prime}\right\rangle _{\beta,N}^{\otimes2}\right]
+q_{\star}^{2}{\mathbb{E}}\left\langle \delta_{\alpha,\alpha^{\prime}%
}\right\rangle _{\beta,N}^{\otimes}\\
& =A_{1}-A_{2}+A_{3},\ \mathrm{say.}%
\end{align*}
$\sim$ meaning that the quotient converges to $1,$ as $N\rightarrow\infty. $

The point process $({{\mathcal{G}}}_{\beta,N}^{\left(  1\right)  }%
(\alpha))_{\alpha}$ converges to a $\operatorname{PD}\left(  m_{\ast}\right)
$ which implies%
\begin{equation}
A_{3}={\mathbb{E}}\left[  \sum_{\alpha}{{\mathcal{G}}}_{\beta,N}^{\left(
1\right)  }(\alpha)^{2}\right]  \sim1-m_{\star}.\label{one}%
\end{equation}
For $A_{1},$ we get%
\begin{align*}
A_{1}  & ={\mathbb{E}}\left[  {\frac{\sum_{\alpha}\left\{  \sinh(\beta
g_{\alpha,1})\sinh(\beta g_{\alpha,2})\right\}  ^{2}\left(  w_{\alpha}%
^{(1,2)}\right)  ^{2}}{\left(  \sum_{\alpha}\cosh(\beta g_{\alpha,1}%
)\cosh(\beta g_{\alpha,2})w_{\alpha}^{(1,2)}\right)  ^{2}}}\right] \\
& ={\mathbb{E}}\left[  {\frac{\sum_{\alpha}\left\{  \sinh(\beta g_{\alpha
,1})\sinh(\beta g_{\alpha,2})\right\}  ^{2}\left(  \overline{w}_{\alpha
}^{(1,2)}\right)  ^{2}}{\left(  \sum_{\alpha}\cosh(\beta g_{\alpha,1}%
)\cosh(\beta g_{\alpha,2})\overline{w}_{\alpha}^{(1,2)}\right)  ^{2}}}\right]
,
\end{align*}
where $\overline{w}_{\alpha}^{(1,2)}\overset{\mathrm{def}}{=}w_{\alpha
}^{(1,2)}\left/  \sum_{\alpha}w_{\alpha}^{(1,2)}\right.  .$ The corresponding
point process converges weakly to $\operatorname{PD}\left(  m_{\ast}\right)
,$ and taking $U_{\alpha}=\sinh(\beta g_{\alpha,1})\sinh(\beta g_{\alpha
,2}),\;V_{\alpha}=\cosh(\beta g_{\alpha,1})\cosh(\beta g_{\alpha,2}),$ a
simple domination argument shows that one can pass to the $N\rightarrow\infty$
limit, replacing the $\overline{w}_{\alpha}^{(1,2)}$ by the points of this
point process. Applying then (\ref{Tala3}), one gets%
\begin{align*}
A_{1}  & \sim(1-m_{\star}){\frac{E\left[  \tanh^{2}(\beta g_{1})\tanh
^{2}(\beta g_{2})\cosh(\beta g_{1})^{m_{\star}}\cosh(\beta g_{2})^{m_{\star}%
}\right]  }{E\left[  \cosh(\beta g_{1})^{m_{\star}}\cosh(\beta g_{2}%
)^{m_{\star}}\right]  }}\\
& =(1-m_{\star})q_{\star}^{2}.
\end{align*}
In a similar way, one proves%
\[
A_{2}\sim2(1-m_{\star})q_{\star}^{2}.
\]
This proves (\ref{overlap1}). (\ref{overlap2}) follows similarly.
\end{proof}

\begin{proof}
[Proof of Proposition \ref{chaos_temperature}]Consider for the moment the
$\beta$-system only. Denote by $f(\beta)$ the free energy and by $G_{m_{\star
}(\beta)}$ the associated extremal measure solving the GVP. For $\varepsilon
>0$, set $B_{\beta,\varepsilon}\overset{{\text{def}}}{=}B_{\varepsilon
}(G_{m_{\star}(\beta)})\subset{\mathcal{M}}_{1}^{+}({\mathbb{R}}^{2})$ for the
open ball of radius $\varepsilon$ and center $G_{m_{\star}(\beta)}$. For
$\alpha\in\Sigma_{N}$ we denote by $L_{N,\alpha}$ the empirical measures
associated to the free energies of the pure states.

We first claim that given $\varepsilon>0$ there exists $\delta>0$ such that
\begin{equation}
{\mathbb{P}}\left[  {\mathcal{G}}_{\beta,N}\left(  \alpha\in\Sigma
_{N}:L_{N,\alpha}\notin B_{\beta,\varepsilon}\right)  \geq\mathrm{e}^{-\delta
N}\right]  \leq\mathrm{e}^{-\delta N}.\label{empirical_measure_chaos}%
\end{equation}
To see this, first observe that uniqueness of the maximizers solving the GVP
implies that, with $\phi(x_{1},x_{2})=\beta x_{1}+\log\cosh(\beta x_{2})$ and
$\mu$ standard bivariate Gaussian,
\begin{equation}
f(\beta,\varepsilon)\overset{\mathrm{def}}{=}\sup\left\{  E_{\nu}[\phi
]-H(\nu\mid\mu):H(\nu\mid\mu)\leq\log2,\nu\notin B_{\beta,\varepsilon
}\right\}  <f(\beta).
\end{equation}
Using the same argument as in the proof of Theorem \ref{main_theorem_abstract}
we get
\[
\lim_{N\rightarrow\infty}{\frac{1}{N}}\log2^{-N}\sum_{\alpha:L_{N,\alpha
}\notin B_{\beta,\varepsilon}}\mathrm{e}^{\beta X_{\alpha}+\sum_{i=1}^{N}%
\log\cosh(\beta g_{\alpha,i})}\leq f(\beta,\varepsilon),\;{\mathbb{P}}-a.s.
\]
Using the variance estimate from \eqref{variance_estimate} and the Tchebychev
inequality, it is easily seen that
\begin{equation}
{\mathbb{P}}[\Lambda_{N}^{c}(\beta,\varepsilon)]\leq\operatorname{exp}(-\delta
N),\label{very_small}%
\end{equation}
for some $\delta>0$, where%
\begin{align*}
\Lambda_{N}(\beta,\varepsilon)  & =\left\{  2^{-N}\sum_{\alpha:L_{N,\alpha
}\notin B_{\beta,\varepsilon}}\mathrm{e}^{\beta X_{\alpha}+\sum_{i}\log
\cosh(\beta g_{\alpha,i})}\leq f(\beta,\varepsilon)+{\frac{\eta}{3}},\right.
\\
& \left.  2^{-N}\sum_{\alpha}\mathrm{e}^{\beta X_{\alpha}+\sum_{i}\log
\cosh(\beta g_{\alpha,i})}\geq f(\beta,\varepsilon)+{\frac{2}{3}}\eta\right\}
\end{align*}
where $\eta\overset{\mathrm{def}}{=}f(\beta)-f(\beta,\varepsilon).$ This
clearly implies \eqref {empirical_measure_chaos}.

If $\beta\neq\beta^{\prime},$ then we can choose $\varepsilon>0$ such that
$B_{\beta,\varepsilon}\cap B_{\beta^{\prime},\varepsilon}=\emptyset,$ and the
claim a) follows.

As for claim (\ref{chaos2}), we observe that
\begin{equation}
{\mathbb{E}}\left\langle \delta_{\alpha\neq\alpha^{\prime}}q(\sigma
,\sigma')^2\right\rangle _{\beta,\beta^{\prime},N}^{\otimes
2}={\mathbb{E}}\left\langle q(\sigma,\sigma')^{2}\right\rangle
_{\beta,\beta^{\prime},N}^{\otimes2}-{\mathbb{E}}\left\langle \delta
_{\alpha=\alpha^{\prime}}q(\sigma,\sigma')^2\right\rangle _{\beta
,\beta^{\prime},N}^{\otimes2}.\label{chaos_one}%
\end{equation}
Since $q(\sigma,\sigma')^2\leq1$ for all $\sigma,\sigma^{\prime} $, by
claim $a)$ of this Proposition the second term on the right hand side in
\eqref{chaos_one} is in the limit $N\rightarrow\infty$ vanishing. As for the
first term on the right hand side of \eqref {chaos_one}, by symmetry and
obvious bounds we have
\begin{equation}
{\mathbb{E}}\left\langle q(\sigma,\sigma')^2\right\rangle _{\beta
,\beta^{\prime},N}^{\otimes2}=\left\{  1+O(1/N)\right\}  {\mathbb{E}%
}\left\langle \sigma_{1}\sigma_{1}^{\prime}\sigma_{2}\sigma_{2}^{\prime
}\right\rangle _{\beta,\beta^{\prime},N}^{\otimes2}+O(1/N).\label{chaos_two}%
\end{equation}

Let us now set $w_{\alpha}^{(1,2,\beta)}\overset{{\text{def}}}{=}%
\operatorname{exp}\left[  \beta X_{\alpha}+\sum_{i=3}^{N}\log\cosh(\beta
g_{\alpha,i})-a_{N}(\beta)\right]  $ for the Boltzmann weight of the pure
state $\alpha$ with a cavity in the sites $i=1,2$ associated to the $\beta
$-system, and $a_{N}(\beta)$ being the centering constant from (\ref{Def_a_N})
specialized to the setting. Analogously, we write $w_{\alpha}^{(1,2,\beta
^{\prime})}\overset{{\text{def}}}{=}\operatorname{exp}\left[  \beta^{\prime
}X_{\alpha}+\sum_{i=3}^{N}\log\cosh(\beta^{\prime}g_{\alpha,i})-a_{N}%
(\beta^{\prime})\right]  $ in case of the $\beta^{\prime}$-system. With this
notations in mind, we write the expectation on the right hand side of
\eqref {chaos_one} as
\begin{align*}
& {\mathbb{E}}\left\langle \sigma_{1}\sigma_{1}^{\prime}\sigma_{2}\sigma
_{2}^{\prime}\right\rangle _{\beta,\beta^{\prime},N}^{\otimes2}=\\
& ={\mathbb{E}}\left[  {\frac{\sum_{\alpha}\sinh(\beta g_{\alpha,1}%
)\sinh(\beta g_{\alpha,2})w_{\alpha}^{(1,2,\beta)}}{\sum_{\alpha}\cosh(\beta
g_{\alpha,1})\cosh(\beta g_{\alpha,2})w_{\alpha}^{(1,2,\beta)}}}\times
{\frac{\sum_{\alpha^{\prime}}\sinh(\beta^{\prime}g_{\alpha^{\prime},1}%
)\sinh(\beta^{\prime}g_{\alpha^{\prime},2})w_{\alpha^{\prime}}^{(1,2,\beta
^{\prime})}}{\sum_{\alpha^{\prime}}\cosh(\beta^{\prime}g_{\alpha^{\prime}%
,1})\cosh(\beta^{\prime}g_{\alpha^{\prime},2})w_{\alpha^{\prime}}%
^{(1,2,\beta^{\prime})}}}\right]  .
\end{align*}
By Proposition \ref{pp_abstract}.b) the Point Process associated to the
collection of "points" of the $\beta$-system $(w_{\alpha}^{(1,2,\beta)})$
converges weakly to a $\operatorname{PP}\left(  m_{\star}(\beta)\right)  $,
while the Point Process associated to the $\beta^{\prime}$-system converges to
a $\operatorname{PP}\left(  m_{\star}(\beta^{\prime})\right)  $. On the other
hand, using similar arguments as in the proof of claim $a)$ it is not
difficult to see that the limiting point processes are in fact independent.
[Given a compact set $K\subset{\mathbb{R}}_{+}$, the ${\mathbb{P}}%
$-probability to find a configuration $\alpha\in\Sigma_{N}$ such that
$w_{\alpha}^{(1,2,\beta)}\in K$ and simultaneously $w_{\alpha}^{(1,2,\beta
^{\prime})}\in K$ is exponentially small in $N$.] Hence, the right hand side
of \eqref {chaos_two} converges with $N\rightarrow\infty$ to the product
\[
E\left[  {\frac{\sum_{\alpha}\sinh(\beta g_{\alpha,1})\sinh(\beta g_{\alpha
,2})w_{\alpha}}{\sum_{\alpha}\cosh(\beta g_{\alpha,1})\cosh(\beta g_{\alpha
,2})w_{\alpha}}}\right]  \times E\left[  {\frac{\sum_{\alpha}\sinh
(\beta^{\prime}g_{\alpha,1})\sinh(\beta^{\prime}g_{\alpha,2})w_{\alpha
}^{\prime}}{\sum_{\alpha}\cosh(\beta^{\prime}g_{\alpha,1})\cosh(\beta^{\prime
}g_{\alpha,2})w_{\alpha}^{\prime}}}\right]  ,
\]
with $(w_{\alpha})$ a $\operatorname{PP}\left(  m_{\star}(\beta)\right)  $,
and $(w_{\alpha}^{\prime})$ of a $\operatorname{PP}\left(  m_{\star}%
(\beta^{\prime})\right)  $. By (\ref{Tala1}) both expectations are seen to be
equal to zero. This settles claim (\ref{chaos2}) of Proposition
\ref{chaos_temperature}.
\end{proof}

\end{document}